\DeclareMathOperator{\trace}{trace}
\newtheorem{lemma}{Lemma}[section]
\newtheorem{corollary}{Corollary}[section]
\newtheorem{theorem}{Theorem}[section]
\newtheorem{remark}{Remark}
\newtheorem{assumption}{Assumption}
\numberwithin{equation}{section}
\author{Andreas Elsener and Sara van de Geer \\
Seminar for Statistics \\
ETH Z\"urich}
\title{Sparse spectral estimation with missing and corrupted measurements}
\begin{document}
\maketitle
\begin{abstract}
Supervised learning methods with missing data have been extensively studied not just due to the techniques related to low-rank matrix completion. Also in unsupervised learning one often relies on imputation methods. As a matter of fact, missing values induce a bias in various estimators such as the sample covariance matrix. In the present paper, a convex method for sparse subspace estimation is extended to the case of missing and corrupted measurements. This is done by correcting the bias instead of imputing the missing values. The estimator is then used as an initial value for a nonconvex procedure to improve the overall statistical performance. The methodological as well as theoretical frameworks are applied to a wide range of statistical problems. These include sparse Principal Component Analysis with different types of randomly missing data and the estimation of eigenvectors of low-rank matrices with missing values. Finally, the statistical performance is demonstrated on synthetic data.
\end{abstract}

\section{Introduction}
\subsection{Background and Motivation}
The vast majority of unsupervised learning methods assume that all variables are fully observed. This is clearly an unrealistic assumption when it comes to applications. It is more realistic to allow for missing observations. Depending on the nature of the problem one can think of different causes for missing data. For instance, in gene expression data missing data might arise as a consequence of device failures. Other types of corruptions might be due to frauds. Apart from possibly malicious manipulations one can also imagine removing observations for privacy reasons.

Following the book \citet{carroll2006measurement} we name some important applications where measurement errors arise. These include the measurement of blood pressure, the measurement of urinary sodium chloride level, and the exposure to pollutants. In \citet{carroll2006measurement} the main focus lies on how to appropriately adapt for the measurement errors in supervised learning.

In this paper, we propose a methodological as well as theoretical framework that can be applied to estimation problems where one is interested in computing (sparse) singular vectors (or eigenvectors).
In particular, a main focus lies on one of the most prominent tools from unsupervised learning: Principal Component Analysis (PCA) and its modifications. PCA goes back to \citet{Pearson1901} and \citet{hotelling1933analysis}. It is mainly used to reduce the dimensionality of a dataset. PCA has two main drawbacks. Firstly, it produces solutions that are linear combinations of all the variables in a given dataset, which is problematic when the number of variables is large. Secondly, it has been shown in \citet{johnstone2009consistency} to be inconsistent when the number of variables exceeds the number of observations. To overcome these limitations a plethora of methods based on the sparsity of the eigenvector corresponding to the largest eigenvalue of the population covariance matrix have been proposed and analyzed: \citet{ZouHastieTibshirani2006}, \citet{d2007direct}, \citet{amini2009}, \citet{vu2013minimax} and \citet{vu2013fantope}. These methods are usually grouped in what is called ``sparse PCA''. Given a data matrix $X \in \mathbb{R}^{n \times p}$ with i.i.d. rows and positive definite covariance matrix $\Sigma_0$ the aim is to compute the vector that maximizes the variance subject to some constraints:
\begin{align}
\label{eqn:sparsepca1}
\begin{split}
\text{maximize} \ &\widehat{\text{Var}}(X \beta) \\
\text{subject to} \ &\Vert \beta \Vert_2 = 1, \\
						   & \Vert \beta \Vert_1 \leq t,
\end{split}
\end{align}
with respect to $\beta \in \mathbb{R}^p$, where $t > 0$ is a tuning parameter. It has to be noticed that the estimation problem (\ref{eqn:sparsepca1}) is to be seen as a representative of the various approaches to sparse PCA. For simplicity, we assume throughout the paper, if not mentioned otherwise, that $X$ has mean zero, so that $\Sigma_0 := \mathbb{E} X^T X/n$. As a consequence, a ``sensible'' estimator for the variance is given by
\begin{align*}
\widehat{\text{Var}} (X \beta) = \beta^T \hat{\Sigma} \beta = \beta^T \frac{X^TX}{n} \beta = \frac{\Vert X \beta \Vert_2^2}{n}.
\end{align*}
Here, we consider the case where the matrix $X$ is corrupted by additional sources of random noise.   This includes the following cases:
\begin{enumerate}[i)]
\item Sparse PCA with missing data (uniformly at random).
\item Sparse PCA with random multiplicative noise.
\item Sparse PCA with missing data (non-uniformly at random).
\item Sparse eigenvectors of low-rank matrices with randomly missing data.
\end{enumerate}
In general, we have under missing data that $\mathbb{E} X^T X/n \neq \Sigma_0$, i.e. the missing data induce a bias in the sample covariance matrix.

Sparse PCA is also at the origin of the formalization of a phenomenon that is observed in many different problems in statistics: the trade-off between computability from an optimization point of view and the purely statistical performance. In a nutshell, this means that in order to effectively compute the solution of the optimization problem one has to pay a statistical price. The kick-off of this booming area is represented by the works \citet{berthet2013complexity} and \citet{berthet2013optimal}. Missing data or different types of corruptions clearly represent an additional challenge.

Our first main contribution is an application of a computationally feasible method based on a convex relaxation of sparse PCA to the estimation of sparse eigenvectors of matrices with missing data. This method was proposed by \citet{d2007direct} and \citet{vu2013fantope}. We show that this estimator is consistent also for the missing data case. The price to pay for a computationally feasible solution is a slower statistical rate of convergence, or equivalently a stronger requirement on the sparsity. Consider the example of randomly missing data with $0 < \delta \leq 1$ being the probability that we observe a single element of the matrix $X$. We obtain the following asymptotic rate for the estimator $\hat{\beta}_{\text{init}}$ of the loading vector of the first principal component $\beta^0$ with $s_0$ non-zero entries:
\begin{align}
\label{eqn:nonoptrate}
\Vert \hat{\beta}_{\text{init}} - \beta^0 \Vert_2 = \mathcal{O}_{\mathbb{P}} \left(s_0 \sqrt{\frac{\log p}{\delta^2 n}}\right).
\end{align}
The rate in equation (\ref{eqn:nonoptrate}) is not optimal as we have a scaling of the type $s_0^2/n$ instead of $s_0/n$. To overcome this limitation, we propose to use a nonconvex acceleration in a second step. A common feature to nonconvex estimation problems is that they typically require a ``good'' initial value to work. The ``good'' initialization is given by the convex relaxation. As a matter of fact, nonconvex optimization problems are often solved iteratively with no guarantee to end up at the global minimum. They rather output a stationary point $\tilde{\beta}$. For this reason, we derive statistical properties of any stationary point. As a consequence, we obtain a rate of the type
\begin{align*}
\Vert \tilde{\beta} - \beta^0 \Vert_2 = \mathcal{O}_{\mathbb{P}} \left( \sqrt{\frac{s_0 \log p}{\delta^2 n}} \right).
\end{align*}
We also examine cases where the distribution of the random missing data mechanism has different parameters depending on the row of $X$ as well as other types of multiplicative noise.
\subsection{Related literature}
Thanks to the proximity to and relevance for applications there is a very rich literature on how to properly deal with missing data in ``supervised learning''. The most famous example is matrix completion. The noiseless matrix completion literature comprises among many others the works of \citet{candes2009exact} and \citet{candes2010power}. The noisy case has been studied among many others in \citet{keshavan2010matrix}, \citet{negahban2011estimation} and \citet{Koltchinskii2011}. In addition, the work of \citet{loh2011high} considers a bias correction of the sample covariance matrix that leads to a nonconvex optimization problem for high-dimensional linear regression.

Another similar question is the estimation of eigenvectors of a low-rank matrix that is corrupted by additive noise. This problem has been studied in \citet{benaych2012singular}, \citet{shabalin2013reconstruction}, \citet{donoho2014minimax} and \citet{cai2018rate}. In contrast to our work, the previously mentioned methods are not related to the sparsity in the singular vectors.

As far as the unsupervised methods with missing data are concerned it has to be said that many of these rely on covariance matrices. It is therefore a main part of these estimation procedures to first properly estimate the covariance matrix in a missing data framework. This has been done in \citet{lounici2014high} and \citet{cai2016minimax}. In particular, PCA with deterministic missing data has recently been studied by \citet{zhang2018heteroskedastic}. An approach based on a bias correction to estimate the covariance matrix is employed in an iterative procedure that involves computing the SVD.

The work \citet{florescu2016spectral} attempts to correct for the bias in the diagonal entries to recover the partition in bipartite stochastic block models. The diagonal entries of the sample covariance matrix are always set to zero. In contrast, we derive a theoretical justification for the bias corrections proposed in our methods.
\subsection{Organization of the paper}
In Section \ref{s:methodology} the convex method as well as the nonconvex methods are presented. The theoretical guarantees for both techniques are discussed from a purely deterministic point of view in Section \ref{s:deterministicresults}. There, it is assumed that the tuning parameter properly bounds the noise part of the problem. In Section \ref{s:applications} the applications to sparse PCA with different random missing data mechanisms as well as the estimation of sparse eigenvectors of low-rank matrices are analyzed. In Section \ref{s:empirical} some simulation results are presented that confirm the theoretical findings.

\section{Methodology}
\label{s:methodology}
We denote an unbiased estimator of the quantity of interest (e.g. the covariance matrix) by $\tilde{\Sigma}$. Inspired by the estimator proposed in \citet{vu2013fantope} and \citet{d2007direct} we solve the following estimation problem with respect to $F \in \mathbb{R}^{p \times p}$:
\begin{align}
\label{eqn:fantopeest}
\begin{split}
&\text{minimize} \ - \trace(\tilde{\Sigma} F) + \mu \Vert F \Vert_1 \\
&\text{subject to} \ \trace(F) = 1, \\
&\phantom{aaaaaaaaa} 0 \preceq F \preceq I,
\end{split}
\end{align}
where $\mu > 0$ is a tuning parameter that needs to appropriately chosen. The optimization problem (\ref{eqn:fantopeest}) is an instance of a semidefinite program (SDP). We will refer to the solution of (\ref{eqn:fantopeest}) as the ``SDP estimator''. As a second step we speed up the statistical rate of convergence by solving a nonconvex optimization problem of the form
\begin{align}
\label{eqn:accelest}
\hat{\beta} = \underset{\beta \in \mathcal{C}}{\arg \min} \ \frac{1}{4} \left\Vert \tilde{\Sigma} - \beta \beta^T \right\Vert_F^2 + \lambda \Vert \beta \Vert_1,
\end{align}
where $\lambda > 0 $ is a tuning parameter and the neighborhood $\mathcal{B}$ is defined as $\mathcal{B} := \left\lbrace \beta \in \mathbb{R}^p : \Vert \beta - \beta^0 \Vert_2 \leq \eta \right\rbrace$. Moreover, for a tuning parameter $Q > 0$ the set of constraints is defined as $\mathcal{C} := \mathcal{B} \cap \left\lbrace \beta \in \mathbb{R}^p : \Vert \beta \Vert_1 \leq Q \right\rbrace$. A common feature of nonconvex optimization problems is that in order to succeed they typically need a ``good'' initial point. Let $\hat{u}_1$ be the eigenvector corresponding to the largest eigenvalue of $\hat{F}$. The vector $\hat{u}_1$ is then normalized. As an initial point for the problem (\ref{eqn:accelest}) we choose the appropriately rescaled solution of the convex problem (\ref{eqn:fantopeest}). We choose $\hat{\beta}_{\text{init}}$ as 
\begin{align}
\label{eqn:initialest}
\hat{\beta}_{\text{init}} := \vert \text{trace}(\tilde{\Sigma} \hat{Z}) \vert^{1/2} \hat{u}_1
\end{align}
We point out that in contrast to \citet{jankova2018biased} we need to take the absolute value of $ \text{trace}(\tilde{\Sigma} \hat{Z}) $ as this quantity might be negative due to $\tilde{\Sigma}$, which could be negative definite. As far as the nonconvex estimator (\ref{eqn:accelest}) is concerned, it is convenient to view it as a penalized empirical risk minimizer. The empirical risk and its theoretical counterpart, the risk, are defined for all $\beta$ as
\begin{align*}
R_n(\beta) = \frac{1}{4} \left\Vert \tilde{\Sigma} - \beta \beta^T \right\Vert_F^2, \quad R(\beta) = \mathbb{E} R_n(\beta).
\end{align*}
Their derivatives are respectively given by
\begin{align*}
\dot{R}_n(\beta) = \frac{\partial}{\partial \beta} R_n(\beta), \quad \dot{R} (\beta) = \mathbb{E} \dot{R}_n(\beta).
\end{align*}
The estimator $\hat{\beta}$ was proposed in \citet{geer2015} and further analyzed in \citet{jankova2018biased} and \citet{elsener2018sharp} in the special case of sparse PCA.
The following lemma parallels Lemma 2 of \citet{jankova2018biased}. We need to adapt it to the initial estimator (\ref{eqn:initialest}) compared to the one in \citet{jankova2018biased} as otherwise we would incur in taking roots of negative numbers. However, we find that Lemma 2 of \citet{jankova2018biased} can be applied to the new initial estimator.

\section{Deterministic results}
\label{s:deterministicresults}
The first theoretical guarantee that needs to be established is about the statistical performance of the initial estimator. We make use of the (deterministic) theory developed in \citet{vu2013fantope}.
\subsection{Initial estimator}
We denote the solution of the optimization problem (\ref{eqn:fantopeest}) by $\hat{F}$. The following lemma provides the theoretical guarantees for the estimator for the eigenvector corresponding to the largest eigenvalue of $\hat{F}$ as well as for its rescaled version (\ref{eqn:initialest}).
\begin{lemma}[Adapted from Lemma 2 in \citet{jankova2018biased}]
\label{lemma:initalest}
Let $\hat{F}$ be the solution of the optimization problem (\ref{eqn:fantopeest}). Assume that $\mu \geq 2 \Vert \tilde{\Sigma} - \Sigma_0 \Vert_\infty$. Assume without loss of generality that $\hat{u}_1^T u_1 \geq 0$. Then
\begin{align*}
\left\Vert \hat{u}_1 - u_1 \right\Vert_2^2 \leq \frac{4C s^2 \mu^2}{(\Lambda_1 - \Lambda_2)^2} =: \varepsilon^2,
\end{align*}
where $\Lambda_1$ and $\Lambda_2$ are the largest and second largest eigenvalues of $\Sigma_0$, respectively. Further, assuming that the largest eigenvalue of $\Sigma_0$ satisfies $\Lambda_{\max}(\Sigma_0) > \alpha$ we have that
\begin{align*}
\left\Vert \hat{\beta}_{\text{\emph{init}}} - \beta^0 \right\Vert_2 \leq \frac{1}{2 \sqrt{\Lambda_{\max}(\Sigma_0) - \alpha}} \alpha + 2 \sqrt{\varepsilon} \Lambda_{\max}(\Sigma_0)^{1/2},
\end{align*}
where
\begin{align*}
\alpha = s \mu + \varepsilon^2 + 6 \Vert \beta^0 \Vert_2^2 \varepsilon + 4 \Vert \beta^0 \Vert_2 \sqrt{\varepsilon}.
\end{align*}
\end{lemma}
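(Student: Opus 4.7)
The plan is to handle the two inequalities separately, using the deterministic Fantope analysis of \citet{vu2013fantope} for the first and a careful rescaling argument for the second. Throughout, I would write $F^0 = u_1 u_1^T$ for the population rank-one projector and note that $\Lambda_1 = \Lambda_{\max}(\Sigma_0) = \|\beta^0\|_2^2$.

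For the eigenvector bound, optimality of $\hat F$ versus the feasible $F^0$ would give the basic inequality
\begin{align*}
-\trace\bigl(\tilde\Sigma(\hat F - F^0)\bigr) + \mu\bigl(\|\hat F\|_1 - \|F^0\|_1\bigr) \leq 0.
\end{align*}
Adding and subtracting $\Sigma_0$, then invoking $|\trace(AB)| \leq \|A\|_\infty \|B\|_1$ together with the noise-domination assumption $\mu \geq 2\|\tilde\Sigma - \Sigma_0\|_\infty$, would absorb the stochastic component into the $\ell_1$ penalty and, after restricting to the $s$-sized support of $F^0$, leave $\trace(\Sigma_0(F^0 - \hat F)) \lesssim s\mu \, \|\hat F - F^0\|_F$. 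The Fantope curvature lemma of \citet{vu2013fantope}, a strong-convexity statement with modulus proportional to the eigengap $\Lambda_1 - \Lambda_2$, upgrades the left-hand side to a multiple of $(\Lambda_1 - \Lambda_2)\|\hat F - F^0\|_F^2$, and hence delivers $\|\hat F - F^0\|_F \leq C s\mu/(\Lambda_1 - \Lambda_2)$. A Davis--Kahan step using the sign convention $\hat u_1^T u_1 \geq 0$ and the elementary chain $\|\hat u_1 - u_1\|_2^2 = 2(1 - \hat u_1^T u_1) \leq 2\bigl(1 - (\hat u_1^T u_1)^2\bigr) = \|\hat u_1 \hat u_1^T - F^0\|_F^2$ then transfers this bound to $\varepsilon^2$, as claimed.

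For the rescaled estimator I would decompose
\begin{align*}
\hat\beta_{\text{init}} - \beta^0 = \bigl(|\trace(\tilde\Sigma \hat Z)|^{1/2} - \sqrt{\Lambda_1}\bigr)\hat u_1 + \sqrt{\Lambda_1}(\hat u_1 - u_1),
\end{align*}
in which the direction term is immediately bounded by $\sqrt{\Lambda_{\max}(\Sigma_0)}\,\varepsilon$. For the magnitude term, I would first argue that $|\trace(\tilde\Sigma \hat Z) - \Lambda_1| \leq \alpha$ by splitting it as $\trace((\tilde\Sigma-\Sigma_0)\hat Z) + \trace(\Sigma_0(\hat Z - F^0))$: the first piece contributes $s\mu$ through H\"older combined with the sparsity built into the Fantope feasibility, while the second, after expanding $\hat Z - F^0$ into first- and second-order eigenvector perturbations and using $\|\beta^0\|_2 = \sqrt{\Lambda_1}$ together with $\|\hat u_1 - u_1\|_2 \leq \varepsilon$, produces the $\varepsilon^2 + 6\|\beta^0\|_2^2\varepsilon + 4\|\beta^0\|_2\sqrt\varepsilon$ contribution (the $\sqrt\varepsilon$ coming from transferring the square-root of the scalar magnitude bound to $\|\cdot\|_2$). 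Applying the elementary inequality $|\sqrt a - \sqrt b| \leq |a - b|/\bigl(2\sqrt{b - |a-b|}\bigr)$ with $b = \Lambda_{\max}(\Sigma_0)$ and $|a - b| \leq \alpha$, legitimate because the assumption $\Lambda_{\max}(\Sigma_0) > \alpha$ keeps the denominator positive, then yields the stated bound.

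The main obstacle I anticipate is the careful bookkeeping of the magnitude term: because $\tilde\Sigma$ is merely unbiased for $\Sigma_0$ and need not be positive semi-definite, $\trace(\tilde\Sigma\hat Z)$ can genuinely be negative, and the absolute value in the definition of $\hat\beta_{\text{init}}$ is exactly what keeps the square root well defined. Reproducing the precise form of $\alpha$, with its separate $\sqrt\varepsilon$, $\varepsilon$ and $\varepsilon^2$ contributions, requires tracking how the Fantope eigenvector perturbation enters at different orders in the expansion of $\hat Z - F^0$; this is the essentially new ingredient compared to Lemma~2 of \citet{jankova2018biased}, whose overall blueprint otherwise transfers with only cosmetic changes.
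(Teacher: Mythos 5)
Your proposal follows essentially the same route as the paper's proof: the eigenvector bound is delegated to the Fantope curvature argument of Vu et al.\ exactly as in Lemma~2 of Jankov\'a and van de Geer, and for the rescaled estimator you use the same decomposition of $\hat{\beta}_{\text{init}}-\beta^0$ into a magnitude term and a direction term, the same splitting of $\trace(\tilde{\Sigma}\hat{F})-\trace(\Sigma_0 u_1 u_1^T)$ into a noise piece bounded via H\"older's inequality and eigenvector-perturbation pieces yielding the stated $\alpha$, and the same square-root inequality with absolute values to handle the possible negativity of $\trace(\tilde{\Sigma}\hat{F})$. The approaches coincide in all essentials.
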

The proof of Lemma \ref{lemma:initalest} can be found in the appendix.

\subsection{Nonconvex acceleration}
The identifiability on the set $\mathcal{C}$ is guaranteed by the following lemma. The risk is shown to be strongly convex on the set $\mathcal{C}$. We first state an assumption on the radius of the neighborhood $\mathcal{B}$ which depends on the signal strength (i.e. the magnitude of the largest singular value of $\Sigma_0$).
\begin{assumption}~
\label{assumption:eigenval}
Let $\phi_{\max} = \phi_1 \geq \dots \phi_p \geq 0$ be the singular values of $\Sigma_0$. Assume that for $\rho >0$
\begin{align*}
\phi_{\max} \geq \phi_j + \rho, \ \text{for all} \ j \neq 1.
\end{align*}
 We assume further that $\rho > 3 \eta$.
\end{assumption}
Assumption \ref{assumption:eigenval} guarantees a sufficiently high curvature in the neighborhood of $\beta^0$ by requiring a sufficiently large gap between the largest and second largest eigenvalues of the  covariance matrix $\Sigma_0$.
\begin{lemma}[Adapted from Lemma III.8. in \citet{elsener2018sharp} and Lemma 12.7 in \citet{geer2015}] Suppose that Assumption \ref{assumption:eigenval} is satisfied. We then have for all $\beta_1, \beta_2 \in \mathcal{C}$ that
\begin{align*}
&R(\beta_1) - R(\beta_2) - \dot{R}(\beta_2)^T(\beta_1 - \beta_2) \\
&\geq 2\phi_{\max}(\rho - 3 \eta) \Vert \beta_1 - \beta_2 \Vert_2^2 =: G(\Vert \beta_1 - \beta_2 \Vert_2),
\end{align*}
where $\phi_{\max}$ is the largest singular value of $\Sigma_0$.
\end{lemma}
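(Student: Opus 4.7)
The plan is to compute the Bregman-type quantity
\[
D(\beta_1,\beta_2) := R(\beta_1) - R(\beta_2) - \dot{R}(\beta_2)^\top(\beta_1-\beta_2)
\]
in closed form and then bound it from below by exploiting the eigengap $\rho$ and the radius $\eta$ of the neighborhood $\mathcal{B}$. Because $\mathbb{E}\tilde{\Sigma} = \Sigma_0$, the risk reads $R(\beta) = \tfrac{1}{4}\trace(\Sigma_0^2) - \tfrac{1}{2}\beta^\top \Sigma_0 \beta + \tfrac{1}{4}\|\beta\|_2^4$ up to an additive constant, so $\dot{R}(\beta) = (\|\beta\|_2^2 I - \Sigma_0)\beta$. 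A short algebraic expansion (collapsing the quadratic differences in $\Sigma_0$ and rewriting the quartic via $\|\beta_1\|_2^2 - \|\beta_2\|_2^2 = 2\beta_2^\top\delta + \|\delta\|_2^2$) yields the key identity
\[
D(\beta_1,\beta_2) = \tfrac{1}{2}\delta^\top(\|\beta_2\|_2^2 I - \Sigma_0)\delta + \tfrac{1}{4}(\|\beta_1\|_2^2 - \|\beta_2\|_2^2)^2, \qquad \delta := \beta_1 - \beta_2,
\]
which serves as the starting point.

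I would then decompose $\delta = \alpha_1 u_1 + \delta_\perp$ in the eigenbasis of $\Sigma_0$. Assumption \ref{assumption:eigenval} gives $\phi_{\max} - \phi_j \geq \rho$ for $j \geq 2$, so the quadratic part is bounded below by
\[
\tfrac{1}{2}(\|\beta_2\|_2^2 - \phi_{\max})\alpha_1^2 + \tfrac{1}{2}(\|\beta_2\|_2^2 - \phi_{\max} + \rho)\|\delta_\perp\|_2^2.
\]
Writing $\beta_i = \beta^0 + e_i$ with $\|e_i\|_2 \leq \eta$, and using $\|\beta^0\|_2^2 = \phi_{\max}$ (so that $\beta^0 = \pm\sqrt{\phi_{\max}}\,u_1$ is the rank-one best approximant of $\Sigma_0$), the deviation $|\|\beta_2\|_2^2 - \phi_{\max}|$ is controlled to order $\eta\sqrt{\phi_{\max}}$. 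For the quartic piece I would expand $\|\beta_1\|_2^2 - \|\beta_2\|_2^2 = 2(\beta^0)^\top\delta + (e_1+e_2)^\top\delta$, whose leading term is $\pm 2\sqrt{\phi_{\max}}\,\alpha_1$ up to an error of modulus $\leq 2\eta\|\delta\|_2$, and then apply the elementary inequality $(a+b)^2 \geq \tfrac{1}{2}a^2 - b^2$ to obtain
\[
\tfrac{1}{4}(\|\beta_1\|_2^2-\|\beta_2\|_2^2)^2 \geq \tfrac{\phi_{\max}}{2}\alpha_1^2 - \eta^2\|\delta\|_2^2.
\]
Adding the two contributions, regrouping over the direction $u_1$ and the orthogonal complement, and absorbing the $\eta$-sized perturbations gives a lower bound of the desired form $2\phi_{\max}(\rho - 3\eta)\|\delta\|_2^2$.

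The main obstacle is that the Hessian $\nabla^2 R(\beta) = \|\beta\|_2^2 I - \Sigma_0 + 2\beta\beta^\top$ is \emph{not} positive semidefinite in the $u_1$ direction (the eigenvalue $\|\beta_2\|_2^2 - \phi_{\max}$ of the quadratic part can be slightly negative), so the quadratic piece of $D$ alone cannot yield strong convexity. The quartic term is precisely what restores curvature in the $u_1$ direction, and the delicate step is to account for the cross terms arising from the perturbation $\beta_i = \beta^0 + e_i$ tightly enough that the resulting strong-convexity constant degrades only linearly in $\eta$; this bookkeeping is what produces the $-3\eta$ correction and motivates the hypothesis $\rho > 3\eta$ in Assumption \ref{assumption:eigenval}.
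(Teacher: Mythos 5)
Your strategy is the right one, and it is essentially the only thing to compare against: the paper itself gives no proof of this lemma, deferring to Lemma III.8 of \citet{elsener2018sharp} and Lemma 12.7 of \citet{geer2015}, and those sources argue exactly as you do --- compute the exact Taylor remainder of the quartic risk, split $\delta=\beta_1-\beta_2$ along $u_1$ and its orthogonal complement, and let the quartic term $\tfrac14(\Vert\beta_1\Vert_2^2-\Vert\beta_2\Vert_2^2)^2$ restore the curvature that the indefinite quadratic part loses in the $u_1$ direction. Your closed-form identity
\begin{align*}
D(\beta_1,\beta_2)=\tfrac12\,\delta^\top\left(\Vert\beta_2\Vert_2^2 I-\Sigma_0\right)\delta+\tfrac14\left(\Vert\beta_1\Vert_2^2-\Vert\beta_2\Vert_2^2\right)^2
\end{align*}
is correct, as are the individual estimates you derive from it.

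The gap is the final sentence: ``adding the two contributions \dots\ gives a lower bound of the desired form $2\phi_{\max}(\rho-3\eta)\Vert\delta\Vert_2^2$.'' It does not. Collecting your own bounds, the coefficient of $\alpha_1^2$ is at least $\tfrac12\phi_{\max}-\eta\sqrt{\phi_{\max}}-\tfrac32\eta^2$ and that of $\Vert\delta_\perp\Vert_2^2$ at least $\tfrac12\rho-\eta\sqrt{\phi_{\max}}-\tfrac32\eta^2$: the perturbations scale like $\eta\sqrt{\phi_{\max}}$, not $\eta\phi_{\max}$, so the hypothesis $\rho>3\eta$ alone cannot absorb them into a factor $(\rho-3\eta)$, and the leading constant is $\tfrac12\min(\phi_{\max},\rho)$ rather than $2\phi_{\max}\rho$. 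The units mismatch you step over silently ($\rho$ is a gap between eigenvalues of $\Sigma_0$, $\eta$ a distance between vectors $\beta$) is resolved in the cited sources by taking the $\phi_j$ to be the \emph{square roots} of the eigenvalues of $\Sigma_0$ and $\beta^0=\phi_{\max}u_1$; redoing your bookkeeping in that normalization, the $3\eta$ does emerge cleanly, but the constant one obtains is $\tfrac12\phi_{\max}(\rho-3\eta)$, a factor $4$ smaller than stated. No argument can reach the stated constant: take $\Sigma_0=\diag(4,1)$ (so $\phi_1=2$, $\phi_2=1$, $\rho=1$), $\beta^0=\beta_2=(2,0)^\top$, $\beta_1=(2,0.1)^\top$, $\eta=0.1$; then $D=0.015025$ while $2\phi_{\max}(\rho-3\eta)\Vert\delta\Vert_2^2=0.028$. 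So you must fix the normalization of $\phi_j$, carry the constants explicitly, and accept that the bound you can actually prove is $\tfrac12\phi_{\max}(\rho-3\eta)\Vert\delta\Vert_2^2$ --- the displayed constant in the lemma appears to be a misprint rather than something your (otherwise sound) argument fails to capture.
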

The statistical performance of \textit{any} stationary point of the objective function (\ref{eqn:accelest}) is assured by the following theorem.
\begin{theorem}[Adapted from Theorem II.1. in \citet{elsener2018sharp}]
\label{thm:sharporacle}
Let $\tilde{\beta}$ be a stationary point. Let $\lambda_\varepsilon > 0$ such that for all $\beta' \in \mathcal{C}$ and a constant $0 \leq \gamma < 1$
\begin{align}
\label{eqn:empiricalcondition}
\left\vert \left( \dot{R}_n(\beta') - \dot{R}(\beta') \right)^T (\beta^0 - \beta') \right\vert \leq \lambda_\varepsilon \Vert \beta' - \beta^0 \Vert_1 + \gamma G(\tau(\beta' - \beta^0)).
\end{align}
Let $\lambda > \lambda_\varepsilon$ and $0 \leq \delta < 1$. Define $$\underline{\lambda} = \lambda - \lambda_\varepsilon, \quad \overline{\lambda} = \lambda + \lambda_\varepsilon + \delta \underline{\lambda}, \quad L = \frac{\overline{\lambda}}{(1-\delta) \underline{\lambda}}.$$
Then we have
\begin{align*}
\delta \underline{\lambda} \Vert \tilde{\beta} - \beta^0 \Vert_1 + R(\tilde{\beta}) \leq R(\beta^0) + \frac{C^2 \overline{\lambda}^2 s}{\Lambda_{\min}(\Sigma_0) (1-\gamma)}.
\end{align*}
\end{theorem}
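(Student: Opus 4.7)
The plan is to follow the by-now standard route for sharp oracle inequalities of penalized $M$-estimators with a nonconvex loss: turn the stationarity condition into a ``basic inequality'', replace the empirical gradients by their population counterparts via the deterministic bound \eqref{eqn:empiricalcondition}, exploit the strong-convexity lower bound $G$ on the risk coming from the preceding lemma, and balance the remaining $\ell_1$- and $\ell_2$-terms by a cone-type decomposition followed by Young's inequality.

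Concretely, I would first convert stationarity of $\tilde\beta$ on $\mathcal C$ into the first-order inequality
\begin{equation*}
\dot R_n(\tilde\beta)^T(\tilde\beta-\beta^0)+\lambda\|\tilde\beta\|_1\le\lambda\|\beta^0\|_1,
\end{equation*}
by selecting a subgradient $\tilde z\in\partial\|\cdot\|_1(\tilde\beta)$ satisfying the normal-cone condition at $\tilde\beta$ and applying convexity of $\|\cdot\|_1$, namely $\tilde z^T(\beta^0-\tilde\beta)\le\|\beta^0\|_1-\|\tilde\beta\|_1$. Next I would decompose
\begin{equation*}
\dot R_n(\tilde\beta)^T(\tilde\beta-\beta^0)=\dot R(\tilde\beta)^T(\tilde\beta-\beta^0)+\bigl(\dot R_n-\dot R\bigr)(\tilde\beta)^T(\tilde\beta-\beta^0),
\end{equation*}
bound the first summand from below by $R(\tilde\beta)-R(\beta^0)+G(\|\tilde\beta-\beta^0\|_2)$ via the strong-convexity lemma (used with $\beta_1=\beta^0$, $\beta_2=\tilde\beta$), and bound the second in absolute value by $\lambda_\varepsilon\|\tilde\beta-\beta^0\|_1+\gamma G(\|\tilde\beta-\beta^0\|_2)$ via \eqref{eqn:empiricalcondition}.

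Splitting the $\ell_1$-terms on the support $S$ of $\beta^0$ through $\|\beta^0\|_1-\|\tilde\beta\|_1\le\|(\tilde\beta-\beta^0)_S\|_1-\|(\tilde\beta-\beta^0)_{S^c}\|_1$, regrouping, and adding $\delta\underline\lambda\|\tilde\beta-\beta^0\|_1$ to both sides yields the cone-type inequality
\begin{equation*}
\delta\underline\lambda\|\tilde\beta-\beta^0\|_1+R(\tilde\beta)-R(\beta^0)+(1-\gamma)G(\|\tilde\beta-\beta^0\|_2)+(1-\delta)\underline\lambda\|(\tilde\beta-\beta^0)_{S^c}\|_1\le\overline\lambda\|(\tilde\beta-\beta^0)_S\|_1.
\end{equation*}
Finally, I would apply $\|(\tilde\beta-\beta^0)_S\|_1\le\sqrt s\,\|\tilde\beta-\beta^0\|_2$ together with Young's inequality $ab\le a^2/(4c)+cb^2$, choosing $c$ so that the resulting quadratic term is absorbed into $(1-\gamma)G(\|\tilde\beta-\beta^0\|_2)$ via its quadratic lower curvature, thereby producing the advertised remainder of order $C^2\overline\lambda^2 s/[\Lambda_{\min}(\Sigma_0)(1-\gamma)]$ on the right-hand side while preserving both $\delta\underline\lambda\|\tilde\beta-\beta^0\|_1$ and $R(\tilde\beta)-R(\beta^0)$ on the left.

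The main obstacle will be the stationarity step under the \emph{combined} constraint set $\mathcal C=\mathcal B\cap\{\|\beta\|_1\le Q\}$: one has to argue that the normal-cone contribution from the $\ell_1$-ball boundary can be combined with a penalty subgradient into a single valid element of $\lambda\,\partial\|\tilde\beta\|_1$, so that the basic inequality still holds. The neighborhood constraint $\mathcal B$ is harmless as long as $\beta^0\in\mathrm{int}(\mathcal B)$, which is the regime in which the preceding strong-convexity lemma is stated. Once these feasibility bookkeeping issues are settled, the rest is the standard manipulation from Theorem II.1 of \citet{elsener2018sharp}.
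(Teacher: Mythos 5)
Your proposal is correct and follows essentially the same route as the source the theorem is adapted from: the paper itself gives no proof of Theorem \ref{thm:sharporacle}, deferring entirely to Theorem II.1 of \citet{elsener2018sharp}, whose argument is exactly your chain of basic inequality from stationarity, Empirical Process Condition, two-point-margin lower bound via $G$, support splitting, and Young's inequality. Your feasibility remarks (needing $\beta^0\in\mathcal{C}$, i.e. $\Vert \beta^0\Vert_1\le Q$, and folding the normal cone of the $\ell_1$-ball into the penalty subgradient) are the right bookkeeping and resolve as you suggest.
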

We notice that Theorem \ref{thm:sharporacle} is a purely deterministic result on the statistical performance of any stationary point $\tilde{\beta}$ on $\mathcal{C}$. The tuning parameter $\lambda > 0$ needs to be chosen appropriately depending on the properties of the specific application.
\begin{remark}
We make use of the same terminology as in \citet{elsener2018sharp} for Condition \ref{eqn:empiricalcondition}: Empirical Process Condition. To verify the Empirical Process Condition we often need ad hoc techniques. They depend on the distributional assumptions on the observations and on the noise. 
\end{remark}

\section{Applications}
\label{s:applications}
In this section we present applications of the proposed method to some statistical estimation problems.
\subsection{Sparse PCA with missing entries}
\label{subsection:sparsepcaunifmissing}
Suppose we observe for $\left\lbrace \delta_{ij} \right\rbrace_{i = 1, \dots, n, j = 1, \dots p}$
\begin{align*}
Y_{ij} = \delta_{ij} X_{ij}.
\end{align*}
By using the usual sample covariance matrix one 
We correct the bias as in \citet{lounici2013sparse}, \citet{lounici2014high} and \citet{loh2011high} by defining
\begin{align}
\tilde{\Sigma} = \frac{1}{\delta^2}\frac{Y^TY}{n} - \frac{1 - \delta}{\delta^2} \text{diag} \left( \frac{Y^TY}{n} \right).
\end{align}
\begin{assumption}~
\label{assumption:pcamissingunif}
\begin{enumerate}[i)]
\item The random vectors $X_1, \dots, X_n \in \mathbb{R}^p$ are i.i.d. copies of a sub-Gaussian random vector $\tilde{X}$. Furthermore, it is assumed that there is a constant $c_1 > 0$ such that
\begin{align*}
\mathbb{E} \left[ (\tilde{X}u)^2 \right] \geq c_1 \left\Vert \tilde{X}u \right\Vert_{\psi_2}^2, \ \text{for all} \ u \in \mathbb{R}^p,
\end{align*}
where for $\psi_2 (x) = \exp(x^2) - 1$ the $\psi_2$ Orlicz norm of a random variable $\tilde{X}$ is defined as $\Vert \tilde{X} \Vert_{\psi_2} = \inf \lbrace C > 0: \mathbb{E} \psi_2(\vert \tilde{X} \vert/C) \leq 1 \rbrace$.
\item The random variables $\delta_{ij}$ are i.i.d. Bernoulli($\delta$) for $0 < \delta \leq 1$ and independent of $\tilde{X}$.
\end{enumerate}
\end{assumption}
The following lemma gives a high-probability bound on the random part (i.e. the noise) of the convex problem (\ref{eqn:initialest}).
\begin{lemma}[Proposition 3 in \citet{lounici2014high}]
Define
\begin{align*}
\mu(t) =&C \frac{\Vert \Sigma_0 \Vert_\infty}{c_1} \\
&\times \max \left\lbrace \sqrt{\frac{\mathbf{r}(\Sigma_0) (t + \log (2p))}{\delta^2 n}}, \frac{\mathbf{r}(\Sigma_0)(t + \log (2p))}{\delta^2 n} (c_1 \delta + t + \log n)  \right\rbrace,
\end{align*}
where $\mathbf{r}(\Sigma_0) = \text{\emph{trace}}(\Sigma_0)/\Vert \Sigma_0 \Vert_\infty$ is the effective rank of $\Sigma_0$.
Then we have for all $t > 1 \vee \log n$ with probability at least $1-\exp(-t)$ that
\begin{align*}
\left\Vert \tilde{\Sigma} - \Sigma_0 \right\Vert_\infty \leq \mu(t).
\end{align*}
\end{lemma}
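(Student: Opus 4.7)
The goal is to control $\|\tilde{\Sigma}-\Sigma_0\|_\infty$. I will take the plan in four steps.

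\textbf{Step 1: Unbiasedness and decomposition.} First I would verify that $\mathbb{E}\tilde{\Sigma}=\Sigma_0$. For the off-diagonal entries $j\neq k$, by independence of $\delta_{ij}$ and $\delta_{ik}$ from $X$ and from each other, $\mathbb{E}[Y_{ij}Y_{ik}] = \delta^2 (\Sigma_0)_{jk}$, so the rescaling by $1/\delta^2$ recovers the right expectation. For the diagonal, $\mathbb{E}[Y_{ij}^2]=\delta(\Sigma_0)_{jj}$ and the correction term $-(1-\delta)/\delta^2\cdot \mathrm{diag}(Y^TY/n)$ is precisely designed so that $\mathbb{E}[\tilde{\Sigma}_{jj}]=(\Sigma_0)_{jj}$. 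Then I would write
\begin{align*}
\tilde{\Sigma}-\Sigma_0 = \frac{1}{n}\sum_{i=1}^n Z_i,\quad Z_i = \frac{1}{\delta^2} Y_i Y_i^T - \frac{1-\delta}{\delta^2}\mathrm{diag}(Y_iY_i^T) - \Sigma_0,
\end{align*}
where $Y_i = (\delta_{i1}X_{i1},\ldots,\delta_{ip}X_{ip})^T$ and the $Z_i$ are i.i.d., mean zero.

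\textbf{Step 2: Entrywise sub-exponential tails.} The essential ingredient is that each scalar $Y_{ij}Y_{ik}$ is sub-exponential: by Assumption \ref{assumption:pcamissingunif}(i) the $X_{ij}$ are sub-Gaussian with $\|X_{ij}\|_{\psi_2}\lesssim \|\Sigma_0\|_\infty^{1/2}/c_1^{1/2}$, hence $\|X_{ij}X_{ik}\|_{\psi_1}\lesssim \|\Sigma_0\|_\infty/c_1$, and multiplying by the Bernoulli masks only decreases the Orlicz norm. The variance of a typical entry of $Z_i$ is of order $(\|\Sigma_0\|_\infty^2/\delta^2)\cdot \mathrm{something}$; more precisely, for an off-diagonal entry one gets $\mathrm{Var}((Z_i)_{jk})\lesssim \|\Sigma_0\|_\infty^2/\delta^2$, while for the diagonal the corrected variance is of order $\|\Sigma_0\|_\infty^2/\delta^3$. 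The effective rank $\mathbf{r}(\Sigma_0)=\mathrm{tr}(\Sigma_0)/\|\Sigma_0\|_\infty$ enters when one bounds the variance of a column-sum or diagonal of $Y_iY_i^T/\delta^2$, since $\mathbb{E}\|Y_i\|_2^2/\delta \leq \mathrm{tr}(\Sigma_0)$.

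\textbf{Step 3: Bernstein plus union bound.} For each pair $(j,k)$ I would apply a scalar Bernstein inequality for sub-exponential sums (e.g.\ the one based on $\psi_1$ Orlicz norms as in Vershynin), yielding
\begin{align*}
\mathbb{P}\left(|\tilde{\Sigma}_{jk}-(\Sigma_0)_{jk}|\geq u\right)\leq 2\exp\left(-c\,n\min\left\{\frac{u^2}{V_{jk}},\frac{u}{K_{jk}}\right\}\right),
\end{align*}
with variance $V_{jk}$ of order $\|\Sigma_0\|_\infty^2\,\mathbf{r}(\Sigma_0)/(c_1\delta^2)$ (obtained by summing the entrywise variances across a row, via the identity $\sum_j\mathrm{Var}((Z_i)_{jk})\lesssim \mathrm{tr}(\Sigma_0)\|\Sigma_0\|_\infty/\delta^2$) and Orlicz parameter $K_{jk}\lesssim \|\Sigma_0\|_\infty/(c_1\delta^2)$, together with a truncation step to kill the maximum of the sub-exponential summands at level $\log n$ (this is where the $(c_1\delta + t + \log n)$ factor comes from in the second branch of $\mu(t)$). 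Taking a union bound over the at most $2p^2$ pairs, and setting $u$ so that $2p^2\cdot\exp(-ct-c\log(2p))=\exp(-t)$, produces the claimed form of $\mu(t)$.

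\textbf{Main obstacle.} The computation itself is straightforward once one chooses the right concentration tool, but the delicate point is to get the variance factor to involve $\mathbf{r}(\Sigma_0)$ rather than just $1$, since the naive pointwise variance would only give $\|\Sigma_0\|_\infty^2/\delta^2$ and no effective rank. The sharpening comes from controlling not the individual entries of $Z_i$ in isolation but the whole column $(Z_i)_{\cdot k}$ whose sub-Gaussian norm is driven by $\|\Sigma_0^{1/2}e_k\|_2^2\leq \|\Sigma_0\|_\infty$ and whose second moment is bounded by $\|\Sigma_0\|_\infty\mathrm{tr}(\Sigma_0)/\delta^2$; this is essentially the argument of Lounici (2014, Proposition~3), and for brevity I would cite that result verbatim once the reduction to it has been made.
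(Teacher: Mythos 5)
The paper does not prove this lemma at all: it is imported verbatim as Proposition~3 of \citet{lounici2014high}, applied to the identical bias-corrected estimator $\tilde{\Sigma}$, and no argument appears in the appendix. Your final move --- reducing to that proposition and citing it --- is therefore exactly what the paper does, and is legitimate. The problem is that the scaffolding you build in Steps 2--3 would not deliver the stated bound if carried out, so it should not be presented as the proof. First, the quantity Lounici controls (and the only reading under which the effective rank $\mathbf{r}(\Sigma_0)=\trace(\Sigma_0)/\Vert\Sigma_0\Vert_\infty$ belongs in the bound) is the \emph{operator} norm of $\tilde{\Sigma}-\Sigma_0$, not the entrywise maximum; an entrywise Bernstein inequality plus a union bound over the $p^2$ pairs $(j,k)$ controls $\max_{j,k}\vert\tilde{\Sigma}_{jk}-(\Sigma_0)_{jk}\vert$, which is a strictly smaller quantity for symmetric matrices and does not imply the operator-norm statement. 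Second, the variance you assign to a single entry, $V_{jk}\asymp\Vert\Sigma_0\Vert_\infty^2\,\mathbf{r}(\Sigma_0)/(c_1\delta^2)$, is not what scalar Bernstein uses: the variance of $(Z_i)_{jk}$ alone carries no $\trace(\Sigma_0)$. The row-sum $\sum_j\Var((Z_i)_{jk})$ that you invoke is the $(k,k)$ entry of $\mathbb{E}Z_i^2$, i.e.\ the variance proxy of the \emph{noncommutative} Bernstein inequality; it only becomes usable once you switch to a matrix concentration argument, at which point the union bound over entries is no longer the mechanism producing the $\log(2p)$ term. Likewise, the factor $(c_1\delta+t+\log n)$ arises in Lounici's proof from truncating $\Vert Y_i\Vert_2^2$ at level of order $\delta\,\trace(\Sigma_0)(t+\log n)$, not from truncating individual sub-exponential summands.

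In short: your Step~1 (unbiasedness and the i.i.d.\ mean-zero decomposition) is correct and matches the setup, and your concluding appeal to Proposition~3 of \citet{lounici2014high} coincides with what the paper actually does; but the entrywise-Bernstein route in between is not a proof of this statement and, as you yourself note in the ``main obstacle'' paragraph, cannot produce the effective-rank scaling. Either cite the proposition outright (as the paper does) or replace Steps 2--3 by the matrix Bernstein argument with truncation of $\Vert Y_i\Vert_2^2$.
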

\begin{remark}
We notice that the scaling with the effective number of observations in the previous lemma is $\delta^2 n$. This is sharper than the one derived e.g. in \citet{loh2011high} and in some of the subsequent sections of the present work.
\end{remark}
As a consequence of the previous lemma and Lemma \ref{lemma:initalest} we have the following corollary.
\begin{corollary}
We have that
\begin{align*}
\left\Vert \hat{\beta}_{\text{\emph{init}}} - \beta^0 \right\Vert_2 \leq \frac{1}{2 \sqrt{\Lambda_{\max}(\Sigma_0) - \alpha}} \alpha + 2 \sqrt{\varepsilon} \Lambda_{\max}(\Sigma_0)^{1/2}.
\end{align*}
with probability at least $ 1 - \exp(-\log(2p))$.
\end{corollary}
The statistical performance of any stationary point of the optimization problem (\ref{eqn:accelest}) depends on the bound on the ``noise term''. This bound is provided by the following lemma.
\begin{lemma}
\label{lemma:sparsepcamissing2}
Define  
\begin{align*}
\tilde{\lambda}_\varepsilon(t) = 4 C^2 \Vert \beta^0 \Vert_2 \left(2 \sqrt{\frac{2 (t + \log p)}{\delta^4 n}} + \frac{t + \log p}{\delta^2 n} \right).
\end{align*}
Let $\zeta > 0$ be a constant. Define $\Lambda_1 := 12 C^2 \Lambda_{\max}(\Sigma_Y)/(\phi_{\max} (\xi - 3 \eta))$ and
\begin{align*}
\gamma = \Lambda_1 \left( \left( \frac{12 \log (2p) + 16}{n} \right) \zeta^{-1} (1 + \zeta) + \zeta \right).
\end{align*}
We then have for all $\tilde{\beta} \in \mathcal{C}$
\begin{align*}
\left\vert \left( \dot{R}_n(\tilde{\beta}) - \dot{R}(\tilde{\beta}) \right)^T (\tilde{\beta} - \beta^0) \right\vert \leq& \tilde{\lambda}_\varepsilon(\log (2p)) \\
&+ 24C^2 Q \frac{16(\log(2p) + 4)}{2n \zeta} \Vert \tilde{\beta} - \beta^0 \Vert_1 \\
&+ 24 C^2 Q \frac{2(\log(2p) + 4)}{n} \Vert \tilde{\beta} - \beta^0 \Vert_1 \\
&+ \gamma G(\tau(\tilde{\beta} - \beta^0)).
\end{align*}
with probability at least $1 - 2 \exp(-\log(2p))$. Choosing 
\begin{equation*}
\zeta < (2 \Lambda_1)^{-1}
\end{equation*}
and assuming
\begin{equation*}
n > 2 \Lambda_1 (12 \log(2p) + 16) \zeta^{-1} (1 + \zeta)
\end{equation*}
we have that $\gamma < 1$. Therefore, the Empirical Process Condition (\ref{eqn:empiricalcondition}) is satisfied.
\end{lemma}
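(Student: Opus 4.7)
The plan is to first compute the gradient of $R_n$ explicitly and reduce the left-hand side to a bilinear form in $\tilde\Sigma-\Sigma_0$. Since
$R_n(\beta) = \tfrac{1}{4}\bigl(\trace(\tilde\Sigma^2) - 2\beta^T\tilde\Sigma\beta + \|\beta\|_2^4\bigr),$
we get $\dot R_n(\beta) = -\tilde\Sigma\beta + \|\beta\|_2^2\,\beta$, and by unbiasedness $\mathbb{E}\tilde\Sigma = \Sigma_0$ so $\dot R(\beta) = -\Sigma_0\beta + \|\beta\|_2^2\beta$. Hence
$(\dot R_n(\tilde\beta) - \dot R(\tilde\beta))^T(\tilde\beta-\beta^0) = -\tilde\beta^T(\tilde\Sigma-\Sigma_0)(\tilde\beta-\beta^0).$
Writing $v := \tilde\beta-\beta^0$ and splitting $\tilde\beta = \beta^0 + v$, the target bound decomposes into a linear-in-$v$ term $(\beta^0)^T(\tilde\Sigma-\Sigma_0)v$ and a quadratic-in-$v$ term $v^T(\tilde\Sigma-\Sigma_0)v$.

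For the linear term, I would apply H\"older to obtain $|(\beta^0)^T(\tilde\Sigma-\Sigma_0)v| \leq \|(\tilde\Sigma-\Sigma_0)\beta^0\|_\infty\,\|v\|_1$, and then bound the $\ell_\infty$-norm coordinate-wise. Each coordinate of $(\tilde\Sigma-\Sigma_0)\beta^0$ is an average of $n$ i.i.d.\ centered products of sub-Gaussian random variables and Bernoulli$(\delta)$ masks, hence sub-exponential with an Orlicz-$\psi_1$ norm controlled through Assumption \ref{assumption:pcamissingunif} (i) and the scaling of the bias correction by $1/\delta^2$. A Bernstein inequality plus union bound over $p$ coordinates yields $\|(\tilde\Sigma-\Sigma_0)\beta^0\|_\infty \lesssim \|\beta^0\|_2\bigl(\sqrt{(t+\log p)/(\delta^4 n)}+(t+\log p)/(\delta^2 n)\bigr)$, which matches $\tilde\lambda_\varepsilon(\log(2p))$ at $t = \log(2p)$; this accounts for the first factor of probability $1-\exp(-\log(2p))$ and supplies the first summand of the stated bound (which should be read as multiplying $\|v\|_1$).

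The quadratic term $v^T(\tilde\Sigma-\Sigma_0)v$ is the main obstacle and the place where Young's inequality enters with parameter $\zeta$. My plan is to interpolate between two a priori bounds: an $\ell_\infty$-Hölder bound $|v^T(\tilde\Sigma-\Sigma_0)v| \leq \|\tilde\Sigma-\Sigma_0\|_\infty\,\|v\|_1^2$ and a population-covariance operator bound $|v^T(\tilde\Sigma-\Sigma_0)v| \leq (\text{deviation})\cdot v^T\Sigma_Y v$. The $\ell_\infty$ piece, combined with $\|v\|_1 \leq 2Q$ arising from $v=\tilde\beta-\beta^0 \in \mathcal{C}$, produces both $\|v\|_1$-linear summands once a Bernstein-type bound on $\|\tilde\Sigma-\Sigma_0\|_\infty$ (second independent probability $1-\exp(-\log(2p))$, whence the total $1-2\exp(-\log(2p))$) is inserted. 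The operator piece, using $v^T\Sigma_Y v \leq \Lambda_{\max}(\Sigma_Y)\|v\|_2^2$, contributes the term that recombines with the strong-convexity lower bound $G(\tau v) = 2\phi_{\max}(\rho-3\eta)\|v\|_2^2$; the Young split $2ab \leq \zeta^{-1} a^2 + \zeta b^2$ explains why the resulting prefactor has the shape $\Lambda_1\bigl(\zeta^{-1}(1+\zeta)(12\log(2p)+16)/n + \zeta\bigr)$, with the choice of constants $12C^2$ and the normalization by $\phi_{\max}(\rho-3\eta)$ absorbed into $\Lambda_1$.

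Finally, I would verify the sub-unity of $\gamma$. Choosing $\zeta < (2\Lambda_1)^{-1}$ forces the $\zeta$-summand of $\gamma$ below $1/2$; imposing $n > 2\Lambda_1(12\log(2p)+16)\zeta^{-1}(1+\zeta)$ forces the $1/n$-summand below $1/2$; summing gives $\gamma<1$. Combining the linear bound, the two $\|v\|_1$-summands from the quadratic Hölder piece, and the $\gamma G(\tau v)$ term from the quadratic operator piece gives the stated inequality and in turn the Empirical Process Condition \eqref{eqn:empiricalcondition}. The two technical hurdles I expect are (i) establishing the sub-exponential Bernstein input with the correct $1/\delta^2$ versus $1/\delta^4$ split, which depends on tracking the variance of the diagonal correction carefully, and (ii) writing the Young split so the deterministic constants line up exactly with $\Lambda_1$ and the factor $24C^2 Q$ in the $\|v\|_1$ summands.
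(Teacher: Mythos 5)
Your proposal is correct and follows essentially the same route as the paper: the same split of $(\dot R_n-\dot R)^T(\tilde\beta-\beta^0)$ into a linear term handled by H\"older plus coordinate-wise Bernstein and a union bound, and a quadratic term handled by a sub-exponential quadratic-form concentration bound (the paper simply cites Lemma D.2 of \citet{elsener2018sharp} for the interpolated $\ell_1$/operator-norm bound with the Young split in $\zeta$ that you describe re-deriving), followed by $\Vert v\Vert_1\le 2Q$, the identification of $v^T\Sigma_Y v$ with $G$, and the same two-condition verification that $\gamma<1$. Your reading that the $\tilde\lambda_\varepsilon$ summand multiplies $\Vert v\Vert_1$ is also the intended one.
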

Combining Lemma \ref{lemma:sparsepcamissing2} with Theorem \ref{thm:sharporacle} we obtain the following corollary.
\begin{corollary}
Let $\tilde{\beta}$ be a stationary point of the objective function (\ref{eqn:accelest}). Let the assumptions in Lemma \ref{lemma:sparsepcamissing2} be satisfied. Define
\begin{align*}
\lambda_\varepsilon = \tilde{\lambda}_\varepsilon (\log (2p)) + 24 C^2 Q \left( \frac{16(\log(2p) + 4)}{2 n \zeta} + \frac{2(\log(2p) + 4)}{n} \right).
\end{align*}
Then we have with probability at least $1 - 2 \exp(- \log (2p))$ that
\begin{align*}
\delta \underline{\lambda} \Vert \tilde{\beta} - \beta^0 \Vert_1 + R(\tilde{\beta}) \leq R(\beta^0) + \frac{\overline{\lambda}^2 s_0}{8 \phi_{\max}(\rho - 3 \eta) (1 - \gamma)}.
\end{align*}
\end{corollary}

\subsection{Sparse PCA with multiplicative noise}
\label{subsection:sparsepcamultnoise}
This case is particularly important when the aim is to protect the customers' privacy. In \citet{hwang1986multiplicative} data collected by the U.S. Department of Energy on the energy consumption in the U.S. were considered. The data were artificially corrupted to avoid a possible identification of the citizens who participated in the survey. Other relevant areas are discussed in \citet{iturria1999polynomial}. There, 
Here we assume that the observed variables are given by
\begin{align*}
Y =  X \odot U,
\end{align*}
where  $\odot$ stays for an element-wise multiplication. The matrix $U$ consists of i.i.d. entries independent of $X$ such that the matrix $\mathbb{E} \left[U_1 U_1^T\right] =: M$ has only positive entries.

We correct the bias by defining
\begin{align}
\tilde{\Sigma} = \frac{Y^TY}{n} \div \mathbb{E} \left[ U_1 U_1^T \right] = \frac{Y^TY}{n} \div M
\end{align}
as in \citet{loh2011high} where $\div$ is to be read as an element-wise division.

\begin{assumption}~
\begin{enumerate}[i)]
\item The random vectors $X_1, \dots, X_n \in \mathbb{R}^p$ are i.i.d. copies of a sub-Gaussian random vector $\tilde{X}$.
\item The rows $U_i$ are i.i.d. with non-negative entries and expectation $\mathbb{E} \left[U_1 \right]$. The matrix $\mathbb{E} \left[U_1 U_1^T \right]$ is assumed to have positive entries. Furthermore, $U$ is assumed to be independent of $\tilde{X}$.
\end{enumerate}
\end{assumption}

\begin{lemma} 
\label{lemma:multnoiseinitial}
Denote $\Sigma_Y = \mathbb{E} Y^TY/n$. Define $\underset{\Vert \beta \Vert_2 \leq 1 }{\sup} \ \Vert \tilde{X} \beta \Vert_{\psi_2} =: C < \infty$ and for $ t > 0$
\begin{align*}
\mu(t) = &\frac{12 C^2}{\Vert M \Vert_{\min}} \sqrt{\frac{8(t + 2(\log(2p) + 4))}{n}} \Vert \Sigma_Y \Vert_\infty \\
&+ \frac{12 C^2}{\Vert M \Vert_{\min}} \sqrt{\frac{16 (\log(2p) + 4) )}{n}} \sqrt{\Vert \Sigma_Y \Vert_\infty} \\
&+ \frac{12 C^2}{\Vert M \Vert_{\min}} \left( \frac{t + 2(\log(2p) + 4)}{n} \right) \Vert \Sigma_Y \Vert_\infty \\
&+ \frac{12 C^2}{\Vert M \Vert_{\min}} \left( \frac{2(\log(2p) + 4}{n} \right).
\end{align*}
For $ t = \log(2p)$ we have that
\begin{align*}
\Vert \tilde{\Sigma} - \Sigma_0 \Vert_\infty \leq \mu(\log(2p))
\end{align*}
with probability at least $ 1 - \exp(-\log(2p))$.
\end{lemma}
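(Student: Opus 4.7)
The plan is to reduce the problem to an entry-wise concentration bound on $Y^T Y/n$ around its mean, and then apply a Bernstein-type inequality for products of sub-Gaussian random variables together with a union bound over the $p^2$ entries.

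First I would unwind the definitions: by independence of $X$ and $U$ together with the fact that the entries of $U$ in row $i$ have second-moment matrix $M$, one has $\mathbb{E}[Y_{ij} Y_{ik}] = (\Sigma_0)_{jk} M_{jk} = (\Sigma_Y)_{jk}$, so $\tilde{\Sigma}$ is indeed unbiased for $\Sigma_0$ and
\[
\bigl|\tilde{\Sigma}_{jk}-(\Sigma_0)_{jk}\bigr| \;=\; \frac{1}{M_{jk}}\,\bigl|(Y^T Y/n)_{jk}-(\Sigma_Y)_{jk}\bigr| \;\leq\; \frac{1}{\Vert M\Vert_{\min}}\,\bigl|(Y^T Y/n)_{jk}-(\Sigma_Y)_{jk}\bigr|.
\]
Taking the maximum over $j,k$ shows that it suffices to produce a high-probability bound on $\Vert Y^T Y /n - \Sigma_Y\Vert_\infty$, which explains the leading $12 C^2/\Vert M\Vert_{\min}$ factor that appears in every term of $\mu(t)$.

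Next, for a fixed pair $(j,k)$ the summands $Z_i := Y_{ij} Y_{ik} - (\Sigma_Y)_{jk}$ are i.i.d., centered and sub-exponential, because each $Y_{ij} = X_{ij} U_{ij}$ is the product of a sub-Gaussian coordinate of $\tilde X$ (with $\psi_2$-norm bounded by $C$) and a non-negative $U_{ij}$ that is absorbed into the constant. The standard Bernstein inequality then produces a deviation bound of the form $\sqrt{V t/n} + K t/n$, where the variance proxy $V$ can be taken as $\mathbb{E}[Y_{ij}^2 Y_{ik}^2]^{1/2} \lesssim \Vert \Sigma_Y\Vert_\infty$ by Cauchy--Schwarz (and the fact that $\Vert \Sigma_Y \Vert_\infty$ dominates the diagonal of $\Sigma_Y$), while $K$ is the sub-exponential norm, controlled by $C^2$. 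A union bound over the $p^2 \leq (2p)^2$ pairs replaces $t$ by $t + 2(\log(2p)+4)$, which is precisely the index that appears in the statement.

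The mixture of $\Vert \Sigma_Y\Vert_\infty$ and $\sqrt{\Vert \Sigma_Y\Vert_\infty}$ in $\mu(t)$ reflects splitting each $Z_i$ into a truncated part $Z_i \mathbb{I}_{|Z_i|\leq \tau}$ and a tail $Z_i \mathbb{I}_{|Z_i|>\tau}$: the truncated part gives the variance-like terms scaled by $\Vert \Sigma_Y\Vert_\infty$ (Bernstein's fast term and its deterministic part), while centering the truncated average and controlling the unbounded tail through the $\psi_2$-norm of $X$ yields the $\sqrt{\Vert \Sigma_Y\Vert_\infty}$ term and the constant term, respectively. Setting $t=\log(2p)$ absorbs the union-bound factor into an overall probability of $1-\exp(-\log(2p))$, finishing the argument once the entrywise bound is divided by $\Vert M\Vert_{\min}$.

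The main obstacle will be bookkeeping the precise numerical constants (the factor $12$, the $+4$ inside the logarithms, and the matching of the four pieces of $\mu(t)$ to the truncated/tail decomposition) rather than any conceptual difficulty: one has to carefully pass between the sub-Gaussian norm $C$ of $\tilde X$, the sub-exponential norm of $X_{ij}X_{ik}$, and the resulting Bernstein constants, which is where a minor misstep tends to produce a different-looking but equivalent bound. The conceptual content, namely the unbiasedness via elementwise division by $M$ and the sub-exponential concentration of products of sub-Gaussians, is essentially a rerun of the analogous step in Lemma \ref{lemma:sparsepcamissing2}.
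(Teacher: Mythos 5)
Your reduction is the same as the paper's: both proofs start from the identity $Y^TY/n \div M - \Sigma_0 = (Y^TY/n - \Sigma_Y)\div M$, pull out the factor $1/\Vert M\Vert_{\min}$, and then reduce the problem to entrywise concentration of $Y^TY/n$ around $\Sigma_Y$. Where you diverge is in how that concentration is obtained. The paper does not rerun a Bernstein argument from scratch; it invokes a pre-packaged quadratic-form deviation bound (Lemma D.2 of \citet{elsener2018sharp}), which already contains the four-term structure of $\mu(t)$ with the $\log(2p)+4$ offsets and the factor $12C^2$, and then specializes to $v=e_j$ and maximizes over $j$. Your route --- treating each entry $(Y^TY/n)_{jk}$ as an average of i.i.d.\ centered sub-exponential variables $Y_{ij}Y_{ik}-(\Sigma_Y)_{jk}$, applying Bernstein, and union-bounding over the $p^2$ pairs --- is more self-contained and, as you anticipate, will reproduce the same shape of bound only up to constants; the specific $12$, the $+4$'s, and the exact split between the $\Vert\Sigma_Y\Vert_\infty$ and $\sqrt{\Vert\Sigma_Y\Vert_\infty}$ terms come from the cited lemma and would not fall out of a generic Bernstein computation. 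Your entrywise formulation $\vert\tilde\Sigma_{jk}-(\Sigma_0)_{jk}\vert = \vert(Y^TY/n)_{jk}-(\Sigma_Y)_{jk}\vert/M_{jk}$ is in fact cleaner than the paper's, since taking $v=e_j$ in a quadratic form only reaches the diagonal entries of the error matrix, whereas the entrywise max norm requires control of all $(j,k)$. One caveat you share with the paper: treating $Y_{ij}=X_{ij}U_{ij}$ as sub-Gaussian with the $U$-factor ``absorbed into the constant'' implicitly requires $U$ to be bounded (or itself sub-Gaussian), which the stated assumption on $U$ does not guarantee in general, though it holds in the Bernoulli missing-data specialization.
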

Combining Lemma \ref{lemma:multnoiseinitial} with Lemma \ref{lemma:initalest} we obtain the following corollary.
\begin{corollary}
We have that
\begin{align*}
\left\Vert \hat{\beta}_{\text{\emph{init}}} - \beta^0 \right\Vert_2 \leq \frac{1}{2 \sqrt{\Lambda_{\max}(\Sigma_0) - \alpha}} \alpha + 2 \sqrt{\varepsilon} \Lambda_{\max}(\Sigma_0)^{1/2}.
\end{align*}
with probability at least $ 1 - \exp(-\log(2p))$.
\end{corollary}

\begin{remark}
The scaling with the inverse magnitude of the noise $1/\Vert M \Vert_{\min}$ is ``hidden'' in the quantity $\alpha$ which is defined in Lemma \ref{lemma:initalest}.
\end{remark}
We now proceed to the statistical properties of the nonconvex estimator. The next lemma establishes the main ingredient needed to apply Theorem \ref{thm:sharporacle}: the Empirical Process Condition \ref{eqn:empiricalcondition}.

\begin{lemma}
\label{lemma:multnoiseacc}
Define  
\begin{align*}
\tilde{\lambda}_\varepsilon(t) = 4 C^2 \Vert \beta^0 \Vert_2  \left(2 \sqrt{\frac{2 (t + \log p)}{\Vert M \Vert_{\min}^2 n}} + \frac{t + \log p}{\Vert M \Vert_{\min} n} \right).
\end{align*}
Let $\zeta > 0$ be a constant. Define $\Lambda_1 := 12 C^2 \Lambda_{\max}(\Sigma_Y)/(\phi_{\max} (\xi - 3 \eta)$ and
\begin{align*}
\gamma = \Lambda_1 \left( \left( \frac{12 \log (2p) + 16}{n} \right) \zeta^{-1} (1 + \zeta) + \zeta \right).
\end{align*}
We then have for all $\tilde{\beta} \in \mathcal{C}$
\begin{align*}
\left\vert \left( \dot{R}_n(\tilde{\beta}) - \dot{R}(\tilde{\beta}) \right)^T (\tilde{\beta} - \beta^0) \right\vert \leq& \tilde{\lambda}_\varepsilon(\log (2p)) \\
&+ 24C^2 Q \frac{16(\log(2p) + 4)}{2n \zeta} \Vert \tilde{\beta} - \beta^0 \Vert_1 \\
&+ 24 C^2 Q \frac{2(\log(2p) + 4)}{n} \Vert \tilde{\beta} - \beta^0 \Vert_1 \\
&+ \gamma G(\tau(\tilde{\beta} - \beta^0)).
\end{align*}
with probability at least $1 - 2 \exp(-\log(2p))$. Choosing 
\begin{equation*}
\zeta < (2 \Lambda_1)^{-1}
\end{equation*}
and assuming
\begin{equation*}
n > 2 \Lambda_1 (12 \log(2p) + 16) \zeta^{-1} (1 + \zeta)
\end{equation*}
we have that $\gamma < 1$. Therefore, the Empirical Process Condition (\ref{eqn:empiricalcondition}) is satisfied.
\end{lemma}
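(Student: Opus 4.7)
The strategy parallels the proof of Lemma \ref{lemma:sparsepcamissing2}; only the concentration inputs change because the noise model is multiplicative rather than Bernoulli-missing. I would first unpack the gradient discrepancy and then handle a linear-in-$v$ piece and a quadratic-in-$v$ piece separately, where $v := \tilde{\beta} - \beta^0$.

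\textbf{Step 1 (gradient and decomposition).} Since $R_n(\beta) - R(\beta) = -\tfrac{1}{2}\beta^{T}(\tilde{\Sigma} - \Sigma_0)\beta$ (the deterministic quartic term cancels in expectation), one has $\dot{R}_n(\beta) - \dot{R}(\beta) = -(\tilde{\Sigma} - \Sigma_0)\beta$. Writing $\tilde{\beta} = \beta^0 + v$, the inner product of interest splits as
\begin{align*}
(\dot{R}_n(\tilde{\beta}) - \dot{R}(\tilde{\beta}))^{T} v = -\beta^{0\,T}(\tilde{\Sigma} - \Sigma_0) v - v^{T}(\tilde{\Sigma} - \Sigma_0) v.
\end{align*}

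\textbf{Step 2 (linear term $\to$ $\tilde{\lambda}_\varepsilon$).} By duality,
$\vert \beta^{0\,T}(\tilde{\Sigma} - \Sigma_0) v \vert \leq \Vert (\tilde{\Sigma} - \Sigma_0)\beta^0 \Vert_\infty \Vert v \Vert_1$. The $j$-th coordinate of $(\tilde{\Sigma} - \Sigma_0)\beta^0$ is an i.i.d.\ average of mean-zero sub-exponential terms $X_{ij} U_{ij}\, (X_i \odot U_i)^{T}\beta^0 / M_{j\cdot}$, whose $\psi_1$-norm is controlled by $C^2 \Vert \beta^0 \Vert_2 / \Vert M \Vert_{\min}$ using the sub-Gaussianity of $\tilde{X}$, the independence of $U$ and $X$, and the uniform lower bound $\Vert M \Vert_{\min}$. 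A Bernstein inequality with union bound over the $p$ coordinates (at level $t = \log(2p)$) produces exactly $\tilde{\lambda}_\varepsilon(\log(2p))$.

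\textbf{Step 3 (quadratic term $\to$ $Q\Vert v\Vert_1$ terms and $\gamma G$).} For $v^{T}(\tilde{\Sigma} - \Sigma_0) v$ I would proceed in two substeps. First, a uniform Bernstein bound on the $p^2$ entries of $\tilde{\Sigma} - \Sigma_0$ gives $\Vert \tilde{\Sigma} - \Sigma_0 \Vert_\infty \leq a_1 \sqrt{\log(2p)/n} + a_2 \log(2p)/n$. Coupling this with $\vert v^{T} A v \vert \leq \Vert A \Vert_\infty \Vert v\Vert_1^2$ and the constraint $\Vert v\Vert_1 \leq \Vert \tilde{\beta}\Vert_1 + \Vert \beta^0 \Vert_1 \leq 2Q$ yields the two $Q \cdot \Vert v\Vert_1$ terms. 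Second, to extract a curvature contribution I apply Young's inequality $2ab \leq \zeta a^2 + \zeta^{-1} b^2$ to the portion of the bound that is linear in both $\Vert v\Vert_2$ and the Bernstein rate, separating a piece $\zeta \,\Lambda_{\max}(\Sigma_Y) \Vert v\Vert_2^2$ from a piece of the form $\zeta^{-1}(\log(2p)/n)\Vert v\Vert_1$. Since $G(\Vert v \Vert_2) = 2\phi_{\max}(\rho - 3\eta)\Vert v\Vert_2^2$, the $\Vert v \Vert_2^2$ piece is absorbed into $\gamma G(\tau(v))$ with
\begin{align*}
\gamma = \Lambda_1 \Bigl( \bigl( \tfrac{12\log(2p)+16}{n}\bigr) \zeta^{-1}(1+\zeta) + \zeta \Bigr),
\end{align*}
as claimed. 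The stated conditions $\zeta < (2\Lambda_1)^{-1}$ and $n > 2\Lambda_1(12\log(2p)+16)\zeta^{-1}(1+\zeta)$ then force $\gamma < 1$, verifying (\ref{eqn:empiricalcondition}).

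\textbf{Main obstacle.} The delicate step is the Bernstein bound for the quadratic form under multiplicative noise: unlike in the Bernoulli-missing case, $\tilde{\Sigma}$ is not guaranteed to be PSD and the element-wise division by $M$ inflates the $\psi_1$-norms of the summands by $1/\Vert M\Vert_{\min}$. One must verify that the sub-exponential constants in the quadratic-form concentration still scale as claimed with $C^2$, $\Lambda_{\max}(\Sigma_Y)$, and $\Vert M \Vert_{\min}^{-1}$, and that the uniformity over $v \in \mathcal{C} - \beta^0$ is achieved via the $\ell_1$-constraint $\Vert \tilde{\beta}\Vert_1 \leq Q$ rather than through a chaining or cone argument.
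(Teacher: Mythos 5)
Your Steps 1 and 2 match the paper's proof: the gradient difference is $-(\tilde{\Sigma}-\Sigma_0)\beta$, the inner product splits into a quadratic piece in $v=\tilde{\beta}-\beta^0$ and a piece linear in $v$, and the linear piece is handled via the identity $\tilde{\Sigma}-\Sigma_0=(Y^TY/n-\Sigma_Y)\div M$, the factor $1/\Vert M\Vert_{\min}$ from the elementwise division, and a coordinatewise Bernstein bound with a union bound over $j=1,\dots,p$; this is exactly how the paper arrives at $\tilde{\lambda}_\varepsilon$.

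The gap is in Step 3. Bounding the quadratic piece by $\vert v^TAv\vert\le\Vert A\Vert_\infty\Vert v\Vert_1^2$ with $\Vert A\Vert_\infty\le a_1\sqrt{\log(2p)/n}+a_2\log(2p)/n$ cannot produce the stated inequality: it attaches the slow rate $\sqrt{\log(2p)/n}$ to $\Vert v\Vert_1^2$, yielding a term of order $Q\sqrt{\log(2p)/n}\,\Vert v\Vert_1$ that does not appear in the claim (both $Q\Vert v\Vert_1$ terms there carry only the fast rate $(\log(2p)+4)/n$) and that cannot be absorbed into $\gamma G$ since it is not comparable to $\Vert v\Vert_2^2$. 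Your second substep then applies Young's inequality to ``the portion of the bound that is linear in $\Vert v\Vert_2$,'' but once you have passed to the entrywise sup-norm bound no such portion exists --- you never derive a $\Vert v\Vert_2$-dependent term to split. The paper instead invokes a quadratic-form concentration inequality for sub-Gaussian rows (Lemma D.2 of \citet{elsener2018sharp}) applied directly to $v^T(Y^TY/n-\Sigma_Y)v$, which by construction separates terms weighted by $v^T\Sigma_Yv$ (these carry the slow rates and the cross term $\Vert v\Vert_1\sqrt{v^T\Sigma_Yv}$, split by Young's inequality with parameter $\zeta$, and are then absorbed into $\gamma G(\Vert v\Vert_2)$ via $v^T\Sigma_Yv\le\Lambda_{\max}(\Sigma_Y)\Vert v\Vert_2^2=\frac{\Lambda_{\max}(\Sigma_Y)}{2\phi_{\max}(\rho-3\eta)}G(\Vert v\Vert_2)$) from terms weighted by $\Vert v\Vert_1^2$ that carry only the fast rate $\log(2p)/n$ and are bounded using $\Vert v\Vert_1\le 2Q$. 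Your ``main obstacle'' paragraph correctly identifies the quadratic form as the delicate point, but the tool you propose for it discards precisely the structure that makes the absorption into $\gamma G$ possible.
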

As a corollary we obtain the statistical performance of \textit{any} stationary point. We combine the previous lemma with Theorem \ref{thm:sharporacle}.
\begin{corollary}
Let $\tilde{\beta}$ be a stationary point of the objective function (\ref{eqn:accelest}). Let the assumptions in Lemma \ref{lemma:sparsepcamissing2} be satisfied. Define
\begin{align*}
\lambda_\varepsilon = \tilde{\lambda}_\varepsilon (\log (2p)) + 24 C^2 Q \left( \frac{16(\log(2p) + 4)}{2 n \zeta} + \frac{2(\log(2p) + 4)}{n} \right).
\end{align*}
Then we have with probability at least $1 - 2 \exp(- \log (2p))$ that
\begin{align*}
\delta \underline{\lambda} \Vert \tilde{\beta} - \beta^0 \Vert_1 + R(\tilde{\beta}) \leq R(\beta^0) + \frac{\overline{\lambda}^2 s_0}{8 \phi_{\max}(\rho - 3 \eta) (1 - \gamma)}.
\end{align*}
\end{corollary}

\subsection{Sparse PCA with non-uniform ``missingness''}
\label{subsection:nonuniformsparsepca}
The results in Subsection \ref{subsection:sparsepcamultnoise} can be applied to the case of ``non-uniformly'' missing data. This means that one can allow for different proportions of missing data in the rows.
As a special case of the previous formulation we might assume that
\begin{align*}
M_{ij}  = \left\lbrace \begin{array}{ll}
\delta_i \delta_j,  & \text{if} \ i \neq j, \\
\delta_i, & \text{if} \ i = j.
\end{array} \right.
\end{align*}

\begin{lemma}
\label{lemma:nonunifmissing}
Denote $\Sigma_Y = \mathbb{E} Y^TY/n$. Define $\underset{\Vert \beta \Vert_2 \leq 1 }{\sup} \ \Vert \tilde{X} \beta \Vert_{\psi_2} =: C < \infty$ and for $ t > 0$
\begin{align*}
\mu(t) = &\frac{12 C^2}{\delta_{\min}^2} \sqrt{\frac{8(t + 2(\log(2p) + 4))}{n}} \Vert \Sigma_Y \Vert_\infty \\
&+ \frac{12 C^2}{\delta_{\min}^2} \sqrt{\frac{16 (\log(2p) + 4) )}{n}} \sqrt{\Vert \Sigma_Y \Vert_\infty } \\
&+ \frac{12 C^2}{\delta_{\min}^2} \left( \frac{t + 2(\log(2p) + 4)}{n} \right) \Vert \Sigma_Y \Vert_\infty \\
&+ \frac{12 C^2}{\delta_{\min}^2} \left( \frac{2(\log(2p) + 4}{n} \right).
\end{align*}
For $ t = \log(2p)$ we have that
\begin{align*}
\Vert \tilde{\Sigma} - \Sigma_0 \Vert_\infty \leq \mu(\log(2p))
\end{align*}
with probability at least $ 1 - \exp(-\log(2p))$.
\end{lemma}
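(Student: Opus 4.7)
The plan is to deduce Lemma \ref{lemma:nonunifmissing} as a direct specialization of Lemma \ref{lemma:multnoiseinitial} (Subsection \ref{subsection:sparsepcamultnoise}), after identifying the non-uniform missingness model as an instance of the multiplicative noise model with an appropriately specified $U$.

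First, I would set $U_{ij}$ to be independent Bernoulli random variables with $\mathbb{P}(U_{ij} = 1) = \delta_j$, independent across rows (so that $U_1, \ldots, U_n$ are i.i.d.\ rows) and independent of $\tilde{X}$. With this choice, $Y = X \odot U$ coincides with the non-uniform missing data model where entry $j$ of any observation is observed with probability $\delta_j$. The assumption of Subsection \ref{subsection:sparsepcamultnoise} (i.i.d.\ rows, non-negative entries, positive $M$) is then verified, with $M = \mathbb{E}[U_1 U_1^T]$ computed entry-wise from independence within a row and the Bernoulli property $U_{1i}^2 = U_{1i}$: for $i \neq j$ one gets $M_{ij} = \mathbb{E}[U_{1i}]\mathbb{E}[U_{1j}] = \delta_i \delta_j$, and $M_{ii} = \mathbb{E}[U_{1i}^2] = \delta_i$, matching the formula stated before the lemma.

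Next, I would compute $\Vert M \Vert_{\min}$. Since each $\delta_i \in (0,1]$, the diagonal entries $\delta_i$ dominate the off-diagonals $\delta_i \delta_j$ pointwise, so the minimum entry of $M$ is attained off the diagonal and equals $\min_{i \neq j} \delta_i \delta_j = \delta_{\min}^2$, where $\delta_{\min} = \min_i \delta_i$. All entries of $M$ are strictly positive as long as $\delta_{\min} > 0$, so the positivity hypothesis in the multiplicative noise assumption is satisfied.

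Finally, I would invoke Lemma \ref{lemma:multnoiseinitial} with this $M$, substituting $\Vert M \Vert_{\min} = \delta_{\min}^2$ into the bound for $\mu(t)$. The resulting concentration inequality on $\Vert \tilde{\Sigma} - \Sigma_0 \Vert_\infty$ is term-for-term the statement of Lemma \ref{lemma:nonunifmissing}, including the probability $1 - \exp(-\log(2p))$ at $t = \log(2p)$. I do not anticipate a substantive obstacle: the only delicate point is checking that the i.i.d.-rows structure is preserved under non-uniform per-coordinate missingness, which is handled by the column-wise independence of the Bernoulli indicators; everything else is an algebraic specialization of the previous lemma.
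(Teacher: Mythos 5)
Your reduction to Lemma \ref{lemma:multnoiseinitial} via the Bernoulli specialization of $U$ and the bound $1/\Vert M \Vert_{\min} \leq 1/\delta_{\min}^2$ is exactly the route the paper takes. One harmless imprecision: $\min_{i \neq j} \delta_i \delta_j$ is the product of the \emph{two smallest} observation probabilities, so in general $\Vert M \Vert_{\min} \geq \delta_{\min}^2$ rather than $= \delta_{\min}^2$; since the inequality goes in the direction you need, the conclusion stands.
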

Lemma \ref{lemma:nonunifmissing} in combination with Lemma \ref{lemma:initalest} give the following corollary. 
\begin{corollary}
We have that
\begin{align*}
\left\Vert \hat{\beta}_{\text{\emph{init}}} - \beta^0 \right\Vert_2 \leq \frac{1}{2 \sqrt{\Lambda_{\max}(\Sigma_0) - \alpha}} \alpha + 2 \sqrt{\varepsilon} \Lambda_{\max}(\Sigma_0)^{1/2}.
\end{align*}
with probability at least $ 1 - \exp(-\log(2p))$.
\end{corollary}
The next lemma provides a bound on the random part of the problem.
\begin{lemma}
\label{lemma:nonunifmisspca}
Define  
\begin{align*}
\tilde{\lambda}_\varepsilon(t) = 4 C^2 \Vert \beta^0 \Vert_2  \left(2 \sqrt{\frac{2 (t + \log p)}{\delta_{\min}^4 n}} + \frac{t + \log p}{\delta_{\min}^2 n} \right).
\end{align*}
Let $\zeta > 0$ be a constant. Define $\Lambda_1 := 12 C^2 \Lambda_{\max}(\Sigma_Y)/(\phi_{\max} (\xi - 3 \eta)$ and
\begin{align*}
\gamma = \Lambda_1 \left( \left( \frac{12 \log (2p) + 16}{n} \right) \zeta^{-1} (1 + \zeta) + \zeta \right).
\end{align*}
We then have for all $\tilde{\beta} \in \mathcal{C}$
\begin{align*}
\left\vert \left( \dot{R}_n(\tilde{\beta}) - \dot{R}(\tilde{\beta}) \right)^T (\tilde{\beta} - \beta^0) \right\vert \leq& \tilde{\lambda}_\varepsilon(\log (2p)) \\
&+ 24C^2 Q \frac{16(\log(2p) + 4)}{2n \zeta} \Vert \tilde{\beta} - \beta^0 \Vert_1 \\
&+ 24 C^2 Q \frac{2(\log(2p) + 4)}{n} \Vert \tilde{\beta} - \beta^0 \Vert_1 \\
&+ \gamma G(\tau(\tilde{\beta} - \beta^0)).
\end{align*}
with probability at least $1 - 2 \exp(-\log(2p))$. Choosing 
\begin{equation*}
\zeta < (2 \Lambda_1)^{-1}
\end{equation*}
and assuming
\begin{equation*}
n > 2 \Lambda_1 (12 \log(2p) + 16) \zeta^{-1} (1 + \zeta)
\end{equation*}
we have that $\gamma < 1$. Therefore, the Empirical Process Condition (\ref{eqn:empiricalcondition}) is satisfied.
\end{lemma}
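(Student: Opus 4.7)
The plan is to obtain Lemma \ref{lemma:nonunifmisspca} as a direct specialization of Lemma \ref{lemma:multnoiseacc}, once the non-uniform missing data model is recast as a particular instance of the multiplicative noise framework of Subsection \ref{subsection:sparsepcamultnoise}. Specifically, I would let $U_{ij}$ be independent Bernoulli random variables across columns $j$, with $\mathbb{P}(U_{ij}=1)=\delta_j$, so that $Y_{ij}=U_{ij}X_{ij}$ models column $j$ being observed with probability $\delta_j$. The rows $U_i$ are then i.i.d., have non-negative entries, and are independent of $\tilde X$, so the assumption underlying Subsection \ref{subsection:sparsepcamultnoise} is satisfied.

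Next I would verify that $M=\mathbb{E}[U_1U_1^T]$ coincides with the matrix prescribed in the statement of this subsection. For $i\neq j$, independence of $U_{1i}$ and $U_{1j}$ gives $M_{ij}=\delta_i\delta_j$; for $i=j$, since $U_{1i}$ is an indicator and therefore $U_{1i}^2=U_{1i}$, one has $M_{ii}=\delta_i$. Consequently all entries of $M$ are strictly positive and
\[
\Vert M\Vert_{\min}=\min\Bigl\{\min_{i\neq j}\delta_i\delta_j,\ \min_i\delta_i\Bigr\}=\delta_{\min}^2,
\]
using $0<\delta_{\min}\leq 1$ so that $\delta_{\min}\geq \delta_{\min}^2$. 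In particular, the bias-corrected estimator $(Y^TY/n)\div M$ reduces in this case to the bias correction encoded in the non-uniform missing data model.

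Having matched the two settings, I would apply Lemma \ref{lemma:multnoiseacc} verbatim with the substitution $\Vert M\Vert_{\min}=\delta_{\min}^2$. The quantities $\tilde\lambda_\varepsilon(t)$, $\Lambda_1$, $\gamma$, and the conditions on $\zeta$ and $n$ translate term by term: the factor $1/\Vert M\Vert_{\min}$ becomes $1/\delta_{\min}^2$ and $1/\Vert M\Vert_{\min}^2$ becomes $1/\delta_{\min}^4$, recovering exactly the expression stated for $\tilde\lambda_\varepsilon(t)$ in Lemma \ref{lemma:nonunifmisspca}. The Empirical Process Condition \eqref{eqn:empiricalcondition} and the high-probability statement therefore transfer without modification.

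The only real check is that the underlying proof of Lemma \ref{lemma:multnoiseacc} does not use properties of $U$ beyond (i) i.i.d.\ rows, (ii) non-negative bounded entries, and (iii) independence from $\tilde X$; the Hanson-Wright / sub-exponential concentration arguments used to control $\beta^{0T}(\tilde\Sigma-\Sigma_0)(\tilde\beta-\beta^0)$ and its quadratic remainder only rely on sub-Gaussianity of $\tilde X$ and the element-wise rescaling by $1/M$. Bernoulli indicators trivially satisfy the requirements on $U$, so no step of the argument is obstructed. The most delicate point to state carefully is therefore item (ii) above — namely that in bounding the concentration of $\tilde\Sigma$ no lower bound on entries of $U$ is needed, only the lower bound $\Vert M\Vert_{\min}\geq \delta_{\min}^2$ on the divisor — which is what allows the reduction to work and makes Lemma \ref{lemma:nonunifmisspca} a clean corollary rather than a separate proof.
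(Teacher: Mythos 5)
Your proposal is correct and matches the paper's own (one-line) proof, which likewise reduces Lemma \ref{lemma:nonunifmisspca} to Lemma \ref{lemma:multnoiseacc} via the bound $1/\Vert M\Vert_{\min}\leq 1/\delta_{\min}^2$. The only nitpick is that $\Vert M\Vert_{\min}=\min\{\min_{i\neq j}\delta_i\delta_j,\min_i\delta_i\}$ is in general only $\geq\delta_{\min}^2$ (the off-diagonal minimum is the product of the two smallest $\delta$'s, not $\delta_{\min}^2$), but this inequality is exactly what the substitution requires, so the argument goes through unchanged.
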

Unifying the previous lemma with Theorem \ref{thm:sharporacle} we have the following corollary.
\begin{corollary}
Let $\tilde{\beta}$ be a stationary point of the objective function (\ref{eqn:accelest}). Let the assumptions in Lemma \ref{lemma:sparsepcamissing2} be satisfied. Define
\begin{align*}
\lambda_\varepsilon = \tilde{\lambda}_\varepsilon (\log (2p)) + 24 C^2 Q \left( \frac{16(\log(2p) + 4)}{2 n \zeta} + \frac{2(\log(2p) + 4)}{n} \right).
\end{align*}
Then we have with probability at least $1 - 2 \exp(- \log (2p))$ that
\begin{align*}
\delta \underline{\lambda} \Vert \tilde{\beta} - \beta^0 \Vert_1 + R(\tilde{\beta}) \leq R(\beta^0) + \frac{\overline{\lambda}^2 s_0}{8 \phi_{\max}(\rho - 3 \eta) (1 - \gamma)}.
\end{align*}
\end{corollary}
\subsection{Low-rank matrices with missing data}
\label{subsection:lowrankmissingdata}
We start with the case of full observations. Assume a model of the form
\begin{align}
Y = X + W,
\end{align}
where $X$ is assumed to have a low-rank and $W$ is a matrix with i.i.d. rows with known covariance $\Sigma_W$ that perturbs the observed entries. In this work we are not interested in estimating/predicting the missing entries of $X$ based on the available noisy observation. We are rather interested in estimating the right singular vector of $X$ that corresponds to the largest eigenvalue of $X^TX$. Indeed, the singular value decomposition of $X$ is 
\begin{align*}
X = U D V^T.
\end{align*}
We are interested in estimating the (sparse) vector $V_1$. This is equivalent to computing the eigenvalue decomposition of $X^TX$:
\begin{align*}
X^TX = V D^2 V^T.
\end{align*}
We use the following unbiased estimator for $X^TX$:
\begin{align*}
\tilde{\Sigma} = Y^TY- \Sigma_W,
\end{align*}
where, for the sake of clarity, we assume the matrix $\Sigma_W$ to be known. We emphasize, however, that one might use approaches such as the ones discussed in \citet{loh2011high} to handle also the (more natural) case where $\Sigma_W$ is unknown.

We now move to the missing-data case. We observe
\begin{align*}
\tilde{Y}_{ij} = \delta_{ij} Y_{ij},
\end{align*}
where $\delta_{ij}$ are i.i.d. Bernoulli($\delta$) random variables independent of $Y$.
Then, an unbiased estimator for $\Sigma_0:=X^TX$ is given by correcting for both the missing data and the additive noise:
\begin{align*}
\tilde{\Sigma} = \frac{1}{\delta^2}  \tilde{Y}^T\tilde{Y}  - \frac{1 - \delta}{\delta^2} \text{diag} \left( \tilde{Y}^T\tilde{Y} \right) - \Sigma_W.
\end{align*}
\begin{assumption}~
\begin{enumerate}[i)]
\item The matrix $X$ is fixed but unknown and is possibly low-rank.
\item The noise matrix $W$ consists of $n$ i.i.d. rows with covariance matrix $\Sigma_W$.
\item The masks $\delta_{ij}$ are i.i.d. Bernoulli($\delta$) and independent of $W$.
\end{enumerate}
\end{assumption}

\begin{lemma}
\label{lemma:lowrankmissingdata}
Denote $\Sigma_{\tilde{Y}} = \mathbb{E} \tilde{Y}^T \tilde{Y}$. Define $\underset{\Vert \beta \Vert_2 \leq 1 }{\sup} \ \Vert \tilde{X} \beta \Vert_{\psi_2} =: C < \infty$ and for $ t > 0$
\begin{align*}
\mu(t) =& 12 C^2 \sqrt{\frac{8(t + 2(\log(2p) + 4))}{ \delta^4 n}} \Vert \Sigma_Y \Vert_\infty \\
&+ 12 C^2 \sqrt{\frac{16(\log(2p) + 4)}{\delta^4 n}} \sqrt{\Vert \Sigma_Y \Vert_\infty } \\
&+ 12 C^2 \left( \frac{t + 2(\log(2p) + 4)}{\delta^2 n} \right) \Vert \Sigma_Y \Vert_\infty \\
&+ 12 C^2 \left( \frac{2(\log(2p) + 4)}{\delta^2 n} \right)
\end{align*}
For $ t = \log(2p)$ we have that
\begin{align*}
\Vert \tilde{\Sigma} - \Sigma_0 \Vert_\infty \leq \mu(\log(2p))
\end{align*}
with probability at least $ 1 - \exp(-\log(2p))$.
\end{lemma}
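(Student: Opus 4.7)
The plan is to parallel the proof of Lemma \ref{lemma:multnoiseinitial} and Lemma \ref{lemma:nonunifmissing}: establish an entrywise sub-exponential Bernstein inequality for $\tilde{\Sigma} - \Sigma_0$, then take a union bound over all $p^2$ entries. The role played there by $\|M\|_{\min}$ is now played by $\delta^2$, and the only genuinely new ingredient is handling the additive noise $W$ together with the deterministic but unknown signal $X$.

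First I would verify by direct computation that $\tilde{\Sigma}$ is unbiased for $\Sigma_0 = X^T X$. Using $\mathbb{E}[\delta_{ij}\delta_{ik}]=\delta^2$ for $j\neq k$ and $\mathbb{E}[\delta_{ij}^2]=\delta$ together with independence of the masks from $Y=X+W$, the off-diagonal entries of $\delta^{-2}\tilde{Y}^T\tilde{Y}$ have mean $(X^TX)_{jk}+(\Sigma_W)_{jk}$, the $-\Sigma_W$ subtraction cancels the second term, and the diagonal correction $-(1-\delta)/\delta^2\,\mathrm{diag}(\tilde{Y}^T\tilde{Y})$ collapses the erroneous factor of $\delta^{-1}$ on the diagonal to $1$. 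Consequently $(\tilde{\Sigma}-\Sigma_0)_{jk}$ is, for each fixed pair $(j,k)$, a centred sum of $n$ i.i.d.\ random variables $Z_i^{(jk)}$ indexed by $i$.

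Next I would bound the $\psi_1$-norm and variance of a single summand. Expanding $Y_{ij}Y_{ik}=(X_{ij}+W_{ij})(X_{ik}+W_{ik})$ into four pieces separates the three sources of randomness cleanly: the purely deterministic $X_{ij}X_{ik}$ couples only with the Bernoulli masks and is bounded; the two cross terms $X_{ij}W_{ik}$ and $X_{ik}W_{ij}$ are sub-Gaussian with $\psi_2$-norms of order $C|X_{ij}|$ and $C|X_{ik}|$; the pure noise term $W_{ij}W_{ik}$ is sub-exponential with $\psi_1$-norm of order $C^2$. Since $\delta_{ij}\delta_{ik}\in\{0,1\}$, multiplying by the masks leaves the Orlicz norms unchanged but inserts a factor $\delta^2$ into the variance. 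Using $\|Y_{ij}\|_{\psi_2}^2\lesssim X_{ij}^2+C^2$ and summing over $i$ via the definition of $\Sigma_Y=\mathbb{E} Y^TY$ gives a sub-exponential norm of order $\|\Sigma_Y\|_\infty$ and a variance of order $\delta^2\|\Sigma_Y\|_\infty^2$ for each $Z_i^{(jk)}$.

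Finally I would apply a sub-exponential Bernstein inequality to $(\delta^2 n)^{-1}\sum_i Z_i^{(jk)}$, distinguishing the off-diagonal case (pre-factor $\delta^{-2}$) from the diagonal case (effective pre-factor $\delta^{-1}$ from the combined corrections, which contributes the extra term $\sqrt{(\log(2p)+4)/(\delta^4 n)}\sqrt{\|\Sigma_Y\|_\infty}$ in $\mu$). Setting $t=\log(2p)$ and union-bounding over the at most $2p^2$ entries of $\tilde{\Sigma}$ reproduces the four additive terms of $\mu(t)$. The main obstacle is the joint handling of the three coupled sources of randomness---Bernoulli masks, sub-Gaussian noise and deterministic-but-possibly-large signal---in one concentration inequality; the four-way expansion above sidesteps this by keeping the sub-Gaussian and sub-exponential contributions separate so that Bernstein can be applied with sharp constants, exactly mirroring the Orlicz manipulations in the proof of Lemma \ref{lemma:multnoiseinitial}.
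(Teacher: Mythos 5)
Your algebraic reduction matches the paper's, but your concentration argument is genuinely different. The paper's proof is very short: it rewrites $\tilde{\Sigma} - \Sigma_0 = \bigl(\tilde{Y}^T\tilde{Y} - \Sigma_{\tilde{Y}}\bigr)\div M$ with $M_{jk}=\delta^2$ off the diagonal and $M_{jj}=\delta$ (so the diagonal correction and the $-\Sigma_W$ subtraction are absorbed into one identity, exactly as in Lemmas \ref{lemma:multnoiseinitial} and \ref{lemma:nonunifmissing}), bounds the elementwise division by $1/\delta^2$, and then invokes the quadratic-form concentration bound of Lemma D.2 in \citet{elsener2018sharp} applied to the rows of $\tilde{Y}$ with $v=e_j$ and a maximum over $j$; the four terms of $\mu(t)$ are read off from that cited lemma, not derived. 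You instead re-derive an entrywise sub-exponential Bernstein bound from scratch via the four-way expansion of $(X_{ij}+W_{ij})(X_{ik}+W_{ik})$ and a union bound over entries. Your route is more self-contained, and because you track the variance of the masked summands exactly (a factor $\delta^2$, not $\delta^4$, under the square root) it would give a sharper effective sample size than the stated $\mu(t)$ --- so it would not literally ``reproduce'' the four terms but would dominate them, consistent with the paper's own remark that its non-Lounici bounds are not sharp in $\delta$.

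Two points need repair before your argument closes. First, the summands $Z_i^{(jk)}$ are independent but \emph{not} i.i.d.: $X$ is a fixed matrix whose rows vary with $i$, so you need Bernstein for independent, non-identically distributed sub-exponential variables, and the resulting bound involves $\max_i \Vert Z_i^{(jk)}\Vert_{\psi_1} \lesssim (\vert X_{ij}\vert + C)(\vert X_{ik}\vert + C)$. The quantity $\max_i \vert X_{ij}\vert$ is not controlled by $\Vert\Sigma_Y\Vert_\infty$, which only bounds the column sums $\sum_i X_{ij}^2$; you need either an extra assumption on $\Vert X \Vert_\infty$ or a different aggregation of the deterministic signal. (The paper buries this same issue inside the cited Lemma D.2, which is stated for sub-Gaussian rows and applied to $\tilde{Y}$ wholesale.) Second, your attribution of the term $\sqrt{16(\log(2p)+4)/(\delta^4 n)}\,\sqrt{\Vert\Sigma_Y\Vert_\infty}$ to the diagonal correction does not match its actual origin: in the paper it is the cross term $\Vert v\Vert_1 \sqrt{v^T\Sigma_{\tilde{Y}}v}$ of Lemma D.2 evaluated at $v=e_j$. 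Neither point invalidates the plan, but both require explicit handling before the stated constants can be claimed.
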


Lemma \ref{lemma:lowrankmissingdata} in combination with Lemma \ref{lemma:initalest} give the following corollary. 
\begin{corollary}
We have that
\begin{align*}
\left\Vert \hat{\beta}_{\text{\emph{init}}} - \beta^0 \right\Vert_2 \leq \frac{1}{2 \sqrt{\Lambda_{\max}(\Sigma_0) - \alpha}} \alpha + 2 \sqrt{\varepsilon} \Lambda_{\max}(\Sigma_0)^{1/2}.
\end{align*}
with probability at least $ 1 - \exp(-\log(2p))$.
\end{corollary}
The following lemma guarantees that the stochastic part of the nonconvex estimation problem is bounded with high probability.
\begin{lemma}
\label{lemma:lowrankmissingdatanonconvex}
Define  
\begin{align*}
\tilde{\lambda}_\varepsilon(t) = 4 C^2 \Vert \beta^0 \Vert_2  \left(2 \sqrt{\frac{2 (t + \log p)}{\delta^4 n}} + \frac{t + \log p}{\delta^2 n} \right).
\end{align*}
Let $\zeta > 0$ be a constant. Define $\Lambda_1 := 12 C^2 \Lambda_{\max}(\Sigma_{\tilde{Y}})/(\phi_{\max} (\xi - 3 \eta)$ and
\begin{align*}
\gamma = \Lambda_1 \left( \left( \frac{12 \log (2p) + 16}{n} \right) \zeta^{-1} (1 + \zeta) + \zeta \right).
\end{align*}
We then have for all $\tilde{\beta} \in \mathcal{C}$
\begin{align*}
\left\vert \left( \dot{R}_n(\tilde{\beta}) - \dot{R}(\tilde{\beta}) \right)^T (\tilde{\beta} - \beta^0) \right\vert \leq& \tilde{\lambda}_\varepsilon(\log (2p)) \\
&+ 24C^2 Q \frac{16(\log(2p) + 4)}{2n \zeta} \Vert \tilde{\beta} - \beta^0 \Vert_1 \\
&+ 24 C^2 Q \frac{2(\log(2p) + 4)}{n} \Vert \tilde{\beta} - \beta^0 \Vert_1 \\
&+ \gamma G(\tau(\tilde{\beta} - \beta^0)).
\end{align*}
with probability at least $1 - 2 \exp(-\log(2p))$. Choosing 
\begin{equation*}
\zeta < (2 \Lambda_1)^{-1}
\end{equation*}
and assuming
\begin{equation*}
n > 2 \Lambda_1 (12 \log(2p) + 16) \zeta^{-1} (1 + \zeta)
\end{equation*}
we have that $\gamma < 1$. Therefore, the Empirical Process Condition (\ref{eqn:empiricalcondition}) is satisfied.
\end{lemma}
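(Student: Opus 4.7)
The proof parallels those of Lemma~\ref{lemma:sparsepcamissing2}, Lemma~\ref{lemma:multnoiseacc} and Lemma~\ref{lemma:nonunifmisspca}; the only new ingredient is the additive-plus-mask structure of the unbiased estimator $\tilde\Sigma = \frac{1}{\delta^2}\tilde Y^T\tilde Y - \frac{1-\delta}{\delta^2}\diag(\tilde Y^T\tilde Y) - \Sigma_W$. Since $\tilde\Sigma$ is symmetric, differentiating $R_n$ and $R$ directly gives $\dot R_n(\beta) - \dot R(\beta) = -(\tilde\Sigma-\Sigma_0)\beta$, so that, writing $v := \tilde\beta - \beta^0$,
\[
(\dot R_n(\tilde\beta)-\dot R(\tilde\beta))^T v = -(\beta^0)^T(\tilde\Sigma-\Sigma_0)v - v^T(\tilde\Sigma-\Sigma_0)v.
\]
The plan is to control the ``pointwise'' piece $(\beta^0)^T(\tilde\Sigma-\Sigma_0)v$ by a Bernstein-type bound that contributes to $\tilde\lambda_\varepsilon$, and the ``quadratic'' piece $v^T(\tilde\Sigma-\Sigma_0)v$ by a Matrix Bernstein bound plus a Young-type split that peels off a $\|v\|_1$ part (the explicit $24C^2 Q$ coefficients) and a $\|v\|_2^2$ part absorbed into $\gamma G(\tau(v))$.

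For the pointwise piece I would first observe that $\Sigma_W$ is deterministic and hence drops out of $\tilde\Sigma - \Sigma_0$, leaving a sum of independent sub-exponential summands indexed by the rows of $\tilde Y = (X+W)\odot\Delta$. Sub-Gaussianity of $\tilde X$ and boundedness of the Bernoulli masks give explicit Orlicz-$1$ control, with scaling $\delta^{-2}$ for the off-diagonal part of $\tilde Y^T\tilde Y$ and $\delta^{-1}$ for the diagonal correction. A coordinate-wise Bernstein inequality applied to $(\tilde\Sigma-\Sigma_0)\beta^0$, combined with the H\"older-type bound $|(\beta^0)^T(\tilde\Sigma-\Sigma_0)v|\le\|(\tilde\Sigma-\Sigma_0)\beta^0\|_\infty\|v\|_1$ and a union bound over the $p$ coordinates, then yields the $\tilde\lambda_\varepsilon(\log(2p))$ contribution with its characteristic $\delta^{-4}n$ and $\delta^{-2}n$ rates.

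For the quadratic piece I would apply a Matrix Bernstein (Tropp-type) inequality to the centred sums $\tilde Y^T\tilde Y/n - \mathbb{E}\tilde Y^T\tilde Y/n$ and to their diagonal part to produce an operator-norm bound of the form $\|\tilde\Sigma-\Sigma_0\|_{\mathrm{op}} \le 12 C^2 \Lambda_{\max}(\Sigma_{\tilde Y})\bigl(\sqrt{(\log(2p)+4)/n} + (\log(2p)+4)/n\bigr)$ on an event of probability $1-\exp(-\log(2p))$, and then use $|v^T(\tilde\Sigma-\Sigma_0)v|\le\|\tilde\Sigma-\Sigma_0\|_{\mathrm{op}}\|v\|_2^2$. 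A weighted AM-GM inequality with parameter $\zeta>0$ splits this into a $\zeta\|v\|_2^2$ piece (which is absorbed into $\gamma G(\tau(v)) = 2\gamma\phi_{\max}(\xi-3\eta)\|v\|_2^2$ via the definition of $\Lambda_1$) and, using $\|v\|_2^2\le 2Q\|v\|_1$ valid on $\mathcal{C}$, a linear-in-$\|v\|_1$ piece with the two coefficients $24C^2Q\cdot 16(\log(2p)+4)/(2n\zeta)$ and $24C^2Q\cdot 2(\log(2p)+4)/n$ matching the Bernstein variance and sub-exponential tail terms respectively. A final union bound over the two events delivers the claimed probability $1-2\exp(-\log(2p))$, and the hypotheses $\zeta<(2\Lambda_1)^{-1}$ and $n>2\Lambda_1(12\log(2p)+16)\zeta^{-1}(1+\zeta)$ are precisely what is needed to make the two contributions to $\gamma$ sum to less than $1$. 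The principal difficulty is bookkeeping: the off-diagonal, the diagonal correction, and the additive noise $W$ each produce sub-exponential terms whose scaling in $\delta$ and whose Orlicz-$1$ parameters differ, so the Matrix Bernstein and Bernstein arguments must be applied separately to each component and then re-assembled through the triangle inequality without degrading the $\delta^{-4}n$ rate inside the square root of $\tilde\lambda_\varepsilon$.
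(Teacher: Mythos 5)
Your overall decomposition is the paper's: write $(\dot R_n(\tilde\beta)-\dot R(\tilde\beta))^T v = v^T(\Sigma_0-\tilde\Sigma)v + (\beta^0)^T(\Sigma_0-\tilde\Sigma)v$ with $v=\tilde\beta-\beta^0$, handle the linear piece by a coordinate-wise Bernstein inequality plus a union bound and H\"older (this part of your argument matches the paper and yields $\tilde\lambda_\varepsilon(\log(2p))$), and reduce the $\delta$-scaling to the elementwise-division structure via $1/\Vert M\Vert_{\min}\le 1/\delta^2$. Indeed the paper's own proof is just a pointer to the proof of Lemma \ref{lemma:multnoiseacc} with that substitution.

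The genuine gap is in your treatment of the quadratic piece. You propose to bound $|v^T(\tilde\Sigma-\Sigma_0)v|\le\Vert\tilde\Sigma-\Sigma_0\Vert_{\mathrm{op}}\Vert v\Vert_2^2$ with an operator-norm bound of order $\sqrt{(\log(2p)+4)/n}$ obtained from matrix Bernstein. Such a bound is not available in the regime the paper cares about: for a sample covariance of sub-Gaussian rows the operator-norm deviation scales with the effective rank $\mathbf{r}(\Sigma_{\tilde Y})=\trace(\Sigma_{\tilde Y})/\Vert\Sigma_{\tilde Y}\Vert_{\mathrm{op}}$ (up to logarithms), which can be as large as $p$; the matrix-variance parameter in Tropp's inequality does not reduce to $\Lambda_{\max}(\Sigma_{\tilde Y})^2$ for rank-one summands. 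So your route cannot recover coefficients involving only $\log(2p)/n$, and the subsequent split into $\Vert v\Vert_1$ and $\Vert v\Vert_2^2$ parts inherits the wrong rate. The paper avoids this by invoking a pointwise-in-$v$ quadratic-form concentration bound (Lemma D.2 of \citet{elsener2018sharp}) that controls $|v^T(\tilde Y^T\tilde Y/n-\Sigma_{\tilde Y})v|$ directly by a combination of $v^T\Sigma_{\tilde Y}v$, $\Vert v\Vert_1\sqrt{v^T\Sigma_{\tilde Y}v}$ and $\Vert v\Vert_1^2$ terms; the parameter $\zeta$ then enters through an AM--GM split of the cross term $\Vert v\Vert_1\sqrt{v^T\Sigma_{\tilde Y}v}$, the $v^T\Sigma_{\tilde Y}v$ terms are absorbed into $\gamma G(\Vert v\Vert_2)$ via $v^T\Sigma_{\tilde Y}v\le\Lambda_{\max}(\Sigma_{\tilde Y})\Vert v\Vert_2^2$ and the definition of $\Lambda_1$, and the $\Vert v\Vert_1^2$ terms become the $24C^2Q\Vert v\Vert_1$ terms via $\Vert v\Vert_1\le 2Q$ on $\mathcal{C}$. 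Replacing your operator-norm step by this restricted quadratic-form bound is the missing idea; the rest of your bookkeeping (union bound over the two events, the conditions on $\zeta$ and $n$ forcing $\gamma<1$) is consistent with the paper.
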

Combining Lemma \ref{lemma:lowrankmissingdatanonconvex} and Theorem \ref{thm:sharporacle} we arrive at the following corollary.
\begin{corollary}
Let $\tilde{\beta}$ be a stationary point of the objective function (\ref{eqn:accelest}). Let the assumptions in Lemma \ref{lemma:sparsepcamissing2} be satisfied. Define
\begin{align*}
\lambda_\varepsilon = \tilde{\lambda}_\varepsilon (\log (2p)) + 24 C^2 Q \left( \frac{16(\log(2p) + 4)}{2 n \zeta} + \frac{2(\log(2p) + 4)}{n} \right).
\end{align*}
Then we have with probability at least $1 - 2 \exp(- \log (2p))$ that
\begin{align*}
\delta \underline{\lambda} \Vert \tilde{\beta} - \beta^0 \Vert_1 + R(\tilde{\beta}) \leq R(\beta^0) + \frac{\overline{\lambda}^2 s_0}{8 \phi_{\max}(\rho - 3 \eta) (1 - \gamma)}.
\end{align*}
\end{corollary}

\section{Empirical results}
\label{s:empirical}
We simulate for $i = 1, \dots, n$ the i.i.d. vectors $X_i \sim \mathcal{N}(0,\Sigma_0)$ with
\begin{align*}
\Sigma_0 = \omega \beta^0 \beta^{0^T} + I_{p \times p},
\end{align*}
where $\omega \in \left\lbrace 0.2, 0.4, 0.6, 0.8, \dots, 2 \right\rbrace$ and
\begin{align*}
\beta^0 = (1, 1, 1, 1, 0, \dots, 0).
\end{align*}
The sample size and dimension are taken to be $n = p = 200$. The random variables $\delta_{i,j}$ are taken to be i.i.d. Bernoulli$(\delta)$, where $\delta$ lies in
\begin{align*}
\delta \in \left\lbrace 0.55, 0.60, 0.65, 0.70, 0.75, 0.80, 0.85, 0.90, 0.95, 1.00 \right\rbrace,
\end{align*}
where $\delta = 0.55$ means that about $55\%$ of the entries are observed and $\delta = 1.00$ means that \textit{all} entries are observed. Every point corresponds to an average of $200$ simulations. The estimation error is given by
\begin{align*}
\text{Estimation error}(\hat{\beta}) = \left\Vert \frac{\hat{\beta} \hat{\beta}^T}{\Vert \hat{\beta} \Vert_2^2} - \frac{\beta_0 \beta_0^T}{\Vert \beta_0 \Vert_2^2} \right\Vert_F.
\end{align*}
In panel a) of Figure \ref{fig:simulation1} we can observe that in this setting PCA fails to consistently estimate the first principal component. This is not surprising as the estimates are linear combinations of \emph{all} variables in the model. Despite the fact that the setting is not (very) high-dimensional setting PCA does not perform very well. In panel b) of Figure \ref{fig:simulation1} the estimation performance of the initial estimator \ref{eqn:fantopeest} can be observed. As expected, the estimation problem becomes easier with a stronger signal and with more observations (or equivalently with fewer missing data).
\begin{figure}[h]
\centering
\begin{subfigure}[t]{0.45\textwidth}
\includegraphics[scale=0.09]{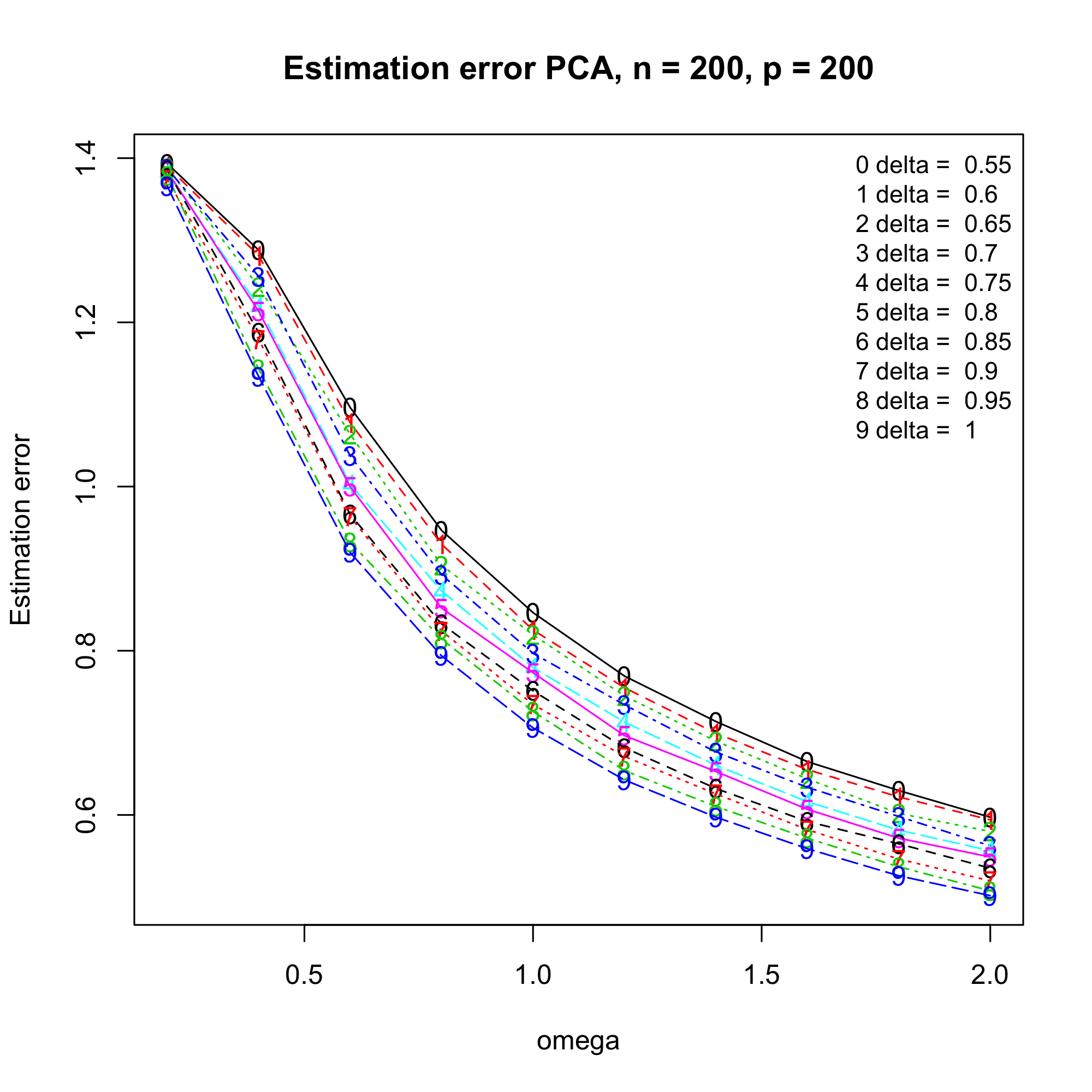}
\vspace{-0.5cm}
\caption{ }
\end{subfigure}
\begin{subfigure}[t]{0.45\textwidth}
\includegraphics[scale=0.09]{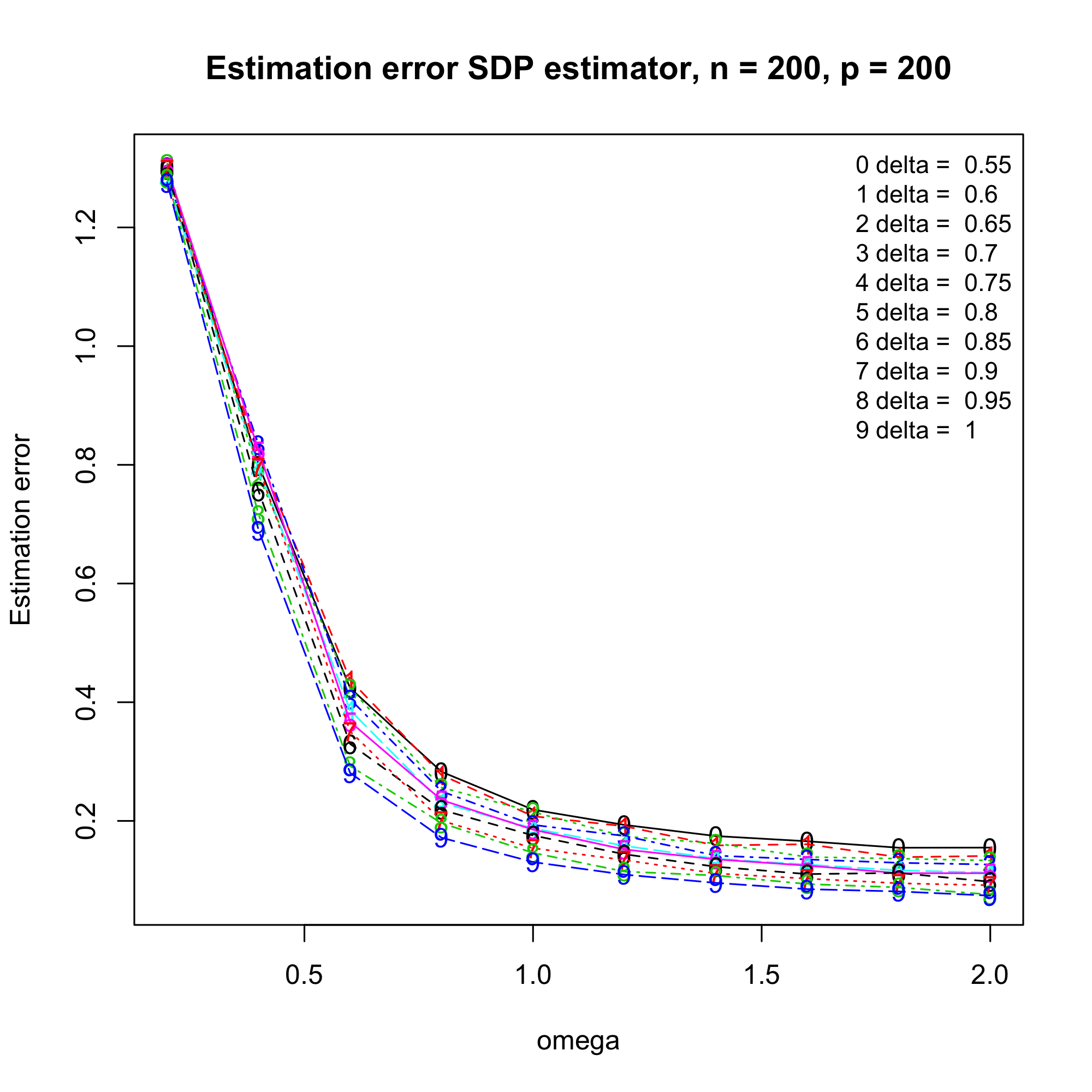}
\vspace{-0.5cm}
\caption{ }
\end{subfigure}
\caption{In panel a) the esimation performance of vanilla PCA and in panel b) the estimation performance of the initial estimator (\ref{eqn:fantopeest}) can be observed. We measure these performances depending on the signal strength $\omega$ and the proportion of observed data $\delta$.}
\label{fig:simulation1}
\end{figure}
We also tried to estimate the loadings of the first PC using the sample covariance estimator in the estimation procedure (\ref{eqn:fantopeest}), i.e. $\tilde{\Sigma} = X^T X/n$. As can be seen from Figure \ref{fig:sim} the estimator of the sample covariance matrix and therefore the ``output'' of the estimation procedure are affected by the missing data.
\begin{figure}
\centering
\includegraphics[scale=0.1]{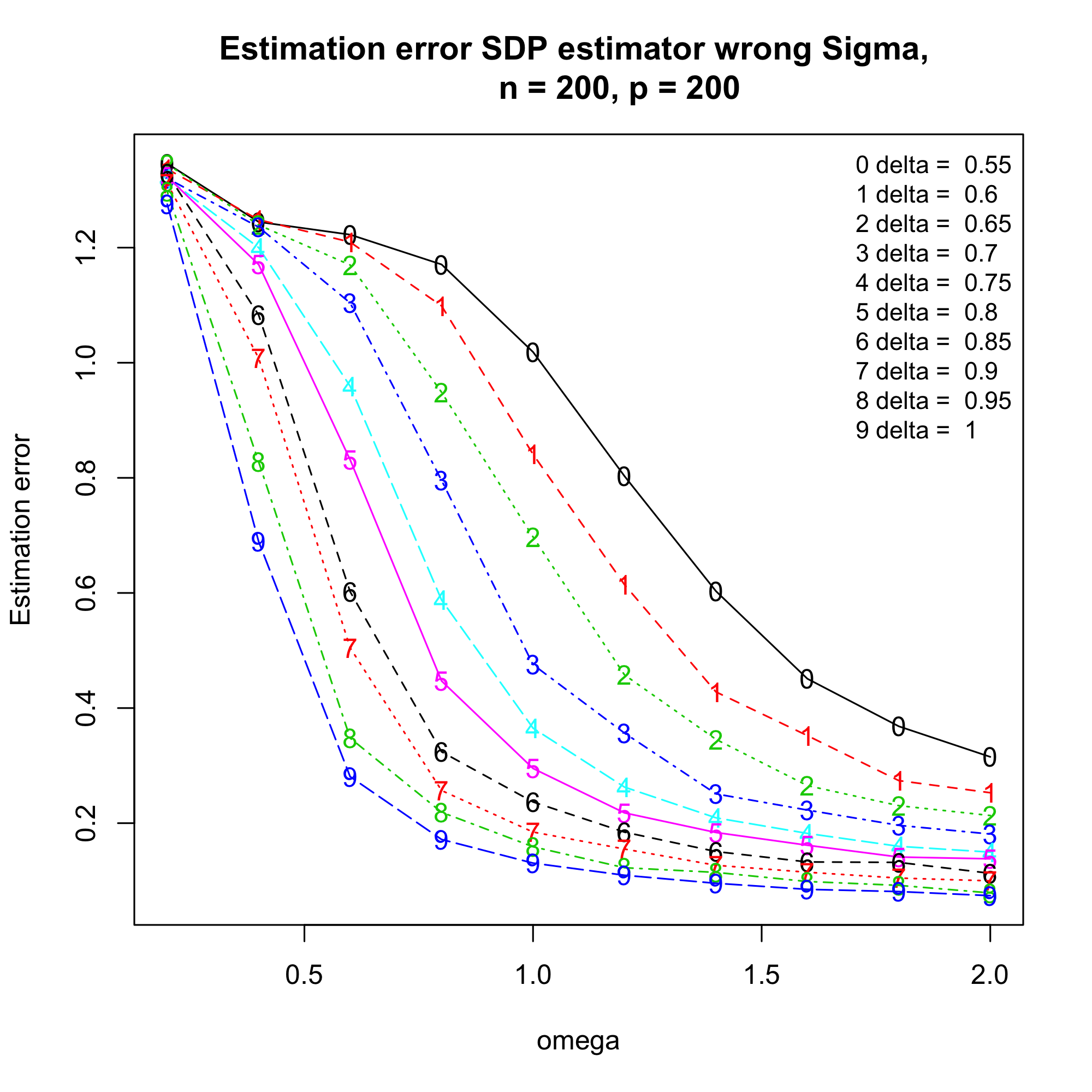}
\vspace{-0.5cm}
\caption{The estimation performance of the SDP estimator (\ref{eqn:fantopeest}) using $\tilde{\Sigma} = X^T X/n$. The performance is clearly worse with $X^TX/n$ than with the bias corrected versions.}
\label{fig:sim}
\end{figure}

Finally, Figure \ref{fig:comparisonhigh-dim} shows a direct comparison between the convex initial estimator, vanilla PCA and an imputation method from \citet{josse2016missmda}. It can be clearly observed that in the present setting the estimator (\ref{eqn:fantopeest}) outperforms vanilla PCA as well as the imputation method. We emphasize that it is a fair comparison as the dimensions of the problems are not ``too high'', namely $n = p = 200$.
\begin{figure}[h]
\begin{center}
\includegraphics[scale = 0.1]{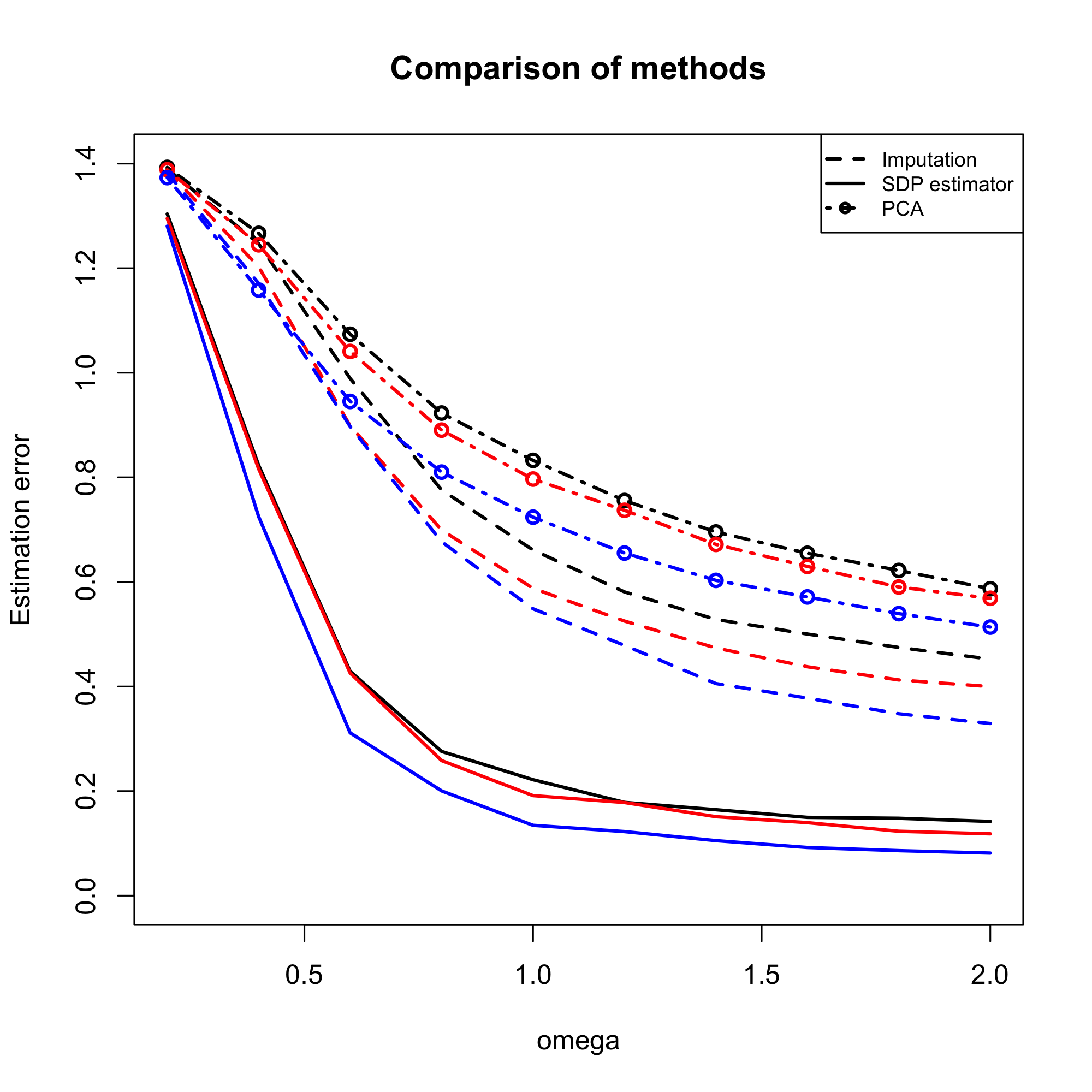}
\end{center}
\vspace{-0.5cm}
\caption{$n = p = 200$. Three different proportions of observed entries are considered: $\delta =  0.6$ (black lines), $\textcolor{red}{\delta = 0.7}$, $\textcolor{blue}{\delta = 0.95}$. The lines are averages of $200$ simulations.}
\label{fig:comparisonhigh-dim}
\end{figure}

\section{Discussion}
In this paper we have extended an existing convex relaxation of sparse Principal Component Analysis to the case of randomly missing data. We have demonstrated that this method can be applied also in different contexts than PCA. Due to the intrinsic computational limitations that come along with sparse PCA we have shown that a nonconvex acceleration might be used to improve the statistical performance. Further developments of the present framework might include sparse Canonical Correlation Analysis with missing data and introducing structured sparsity. Another future development of the present work might be similar as the one proposed in \citet{jankova2018biased}: with their methodological and theoretical results it might be possible to derive confidence regions under missing data. However, it has to be said that this could be computationally very intensive due to the estimation of the inverse Fisher information matrices needed to compute the debiased estimators.

\newpage 

\appendix
\section{Proofs}
\subsection{Proof of Lemma \ref{lemma:initalest}}
We parallel the proof of Lemma 2 in \citet{jankova2018biased}. At the end, the proof needs to be modified in order to match ``our'' initial estimator.

\begin{proof}[Proof of Lemma \ref{lemma:initalest}]
As far as the first part of the assertion is concerned, the proof works as in \citet{jankova2018biased}. The difference to our case lies in the derivation of upper bound for $\Vert \hat{\beta}_{\text{init}} - \beta^0 \Vert_2$. The eigendecomposition of the matrix $\hat{F}$ is given by
\begin{align*}
\hat{F} = \sum_{j = 1}^p \hat{\phi}_j^2 \hat{u}_j \hat{u}_j^T,
\end{align*}
where $\hat{\phi}_1 \geq \dots \geq \hat{\phi}_p$ and $\hat{u}_j^T\hat{u}_j = 1$.
We have that
\begin{align*}
&\left\vert \text{trace}(\tilde{\Sigma} \hat{F}) - \text{trace}(\Sigma_0 u_1 u_1^T) \right\vert &\\
&\leq \underbrace{\left\vert \text{trace} \left( \tilde{\Sigma} - \Sigma_0 \right) \hat{F} \right\vert}_{ = I} + \underbrace{\left\vert \text{trace} \left( \Sigma_0 \left( \sum_{j = 2}^p \hat{\phi}_j^2 \hat{u}_j \hat{u}_j^T \right) \right) \right\vert}_{= II} \\
&\phantom{aaa} + \underbrace{\left\vert \text{trace} \left(\hat{\phi}_1^2 \hat{u}_1 \hat{u}_1^T - u_1 u_1^T \right) \right\vert}_{= III}.
\end{align*}
As far as the first term is concerned, we have by the dual norm inequality
\begin{align*}
 I \leq \left\Vert \tilde{\Sigma} - \Sigma_0 \right\Vert_\infty \Vert \hat{F} \Vert_1 \leq \mu ( s + \varepsilon^2/\mu).
\end{align*}
As far as the second term is concerned, we have
\begin{align*}
II = 	\text{trace} \left( \Sigma_0 \left( \sum_{j = 2}^p \hat{\phi}_j^2 \hat{u}_j \hat{u}_j^T \right) \right) = \sum_{j = 2}^p \hat{\phi}_j^2 \hat{u}_j^T \Sigma_0 \hat{u}_j \\
\leq \Lambda_{\max}(\Sigma_0) \sum_{j = 2}^p \hat{\phi}_j^2 \leq \Lambda_{\max}(\Sigma_0) \varepsilon.
\end{align*}
As far as the third term is concerned, we have that
\begin{align*}
&III = \left\vert \text{trace} \left( \Sigma_0 (\hat{\phi}_1^2 \hat{u}_1 \hat{u}_1^T - u_1 u_1^T \right) \right\vert \\
&\leq \left\vert \text{trace} \left( \Sigma_0 (\hat{\phi}_1^2 -1) \hat{u}_1 \hat{u}_1^T \right) \right \vert  \\
&\phantom{aaaa} + \left\vert \text{trace} \left(\Sigma_0 (\hat{u}_1 \hat{u}_1^T - u_1 u_1^T \right) \right\vert \\
&\leq \varepsilon \Vert \hat{u}_1 \Vert_2 \Lambda_{\max}(\Sigma_0) + \left\vert (\hat{u}_1 - u_1)^T \Sigma_0 (\hat{u}_1 - u_1) \right\vert + \\
&\phantom{aaa} + 2 \left\vert u_1^T \Sigma_0 (\hat{u}_1 - u_1) \right\vert \\
&\leq \varepsilon \Lambda_{\max}(\Sigma_0) + \Lambda_{\max}(\Sigma_0) \Vert \hat{u}_1 - u_1 \Vert_2^2 \\
&\phantom{aaaa} + 2 \Lambda_{\max}(\Sigma_0)^{1/2} \Vert \hat{u}_1 - u_1 \Vert_2 \\
&\leq 5 \varepsilon \Lambda_{\max}(\Sigma_0) + 4 \sqrt{\varepsilon} \Lambda_{\max}(\Sigma_0)^{1/2}.
\end{align*}
We finally obtain
\begin{align*}
&\left\vert \text{trace} (\tilde{\Sigma} \hat{F}) - \text{trace}(\Sigma_0 u_1 u_1^T) \right\vert \\
&\leq \mu/2 (s + \varepsilon^2/2) + 2 \left(3 \Lambda_{\max}(\Sigma_0) \varepsilon + 2 \Lambda_{\max}(\Sigma_0)^{1/2} \sqrt{\varepsilon} \right) \\
&=: \alpha.
\end{align*}
By the triangle inequality this implies that
\begin{align*}
\left\vert \text{trace}(\Sigma_0 u_1 u_1^T)  \right\vert - \left\vert \text{trace}(\tilde{\Sigma} \hat{F})  \right\vert \leq \left\vert \left\vert \text{trace} (\tilde{\Sigma} \hat{F})  \right\vert - \left\vert \text{trace}(\Sigma_0 u_1 u_1^T) \right\vert  \right\vert \leq \alpha.
\end{align*} 
Multiplying both sides by $-1$ we obtain
\begin{align*}
 \left\vert \text{trace}(\tilde{\Sigma} \hat{F}) \right\vert - \left\vert \text{trace}(\Sigma_0 u_1 u_1^T)  \right\vert \geq - \alpha.
\end{align*}
Hence, 
\begin{align*}
\left\vert \text{trace}(\tilde{\Sigma} \hat{F}) \right\vert  \geq   \left\vert \text{trace}(\Sigma_0 u_1 u_1^T) \right\vert - \alpha.
\end{align*}
Then,
\begin{align*}
\left\vert \text{trace}(\tilde{\Sigma} \hat{F}) \right\vert^{1/2} \geq \sqrt{\left\vert \text{trace}(\Sigma_0 u_1 u_1^T) \right\vert - \alpha}.
\end{align*}
Therefore,
\begin{align*}
&\left\vert \left\vert \text{trace}(\tilde{\Sigma} \hat{F}) \right\vert^{1/2} + \left\vert \text{trace}(\Sigma_0 u_1 u_1^T) \right\vert^{1/2} \right\vert \\
&\geq \left\vert \sqrt{\left\vert \text{trace}(\Sigma_0 u_1 u_1^T) \right\vert - \alpha} + \left\vert \text{trace}(\Sigma_0 u_1 u_1^T) \right\vert^{1/2} \right\vert \\
&\geq 2 \sqrt{\left\vert \text{trace}(\Sigma_0 u_1 u_1^T) \right\vert - \alpha}.
\end{align*}
Then we have
\begin{align*}
\left\Vert \hat{\beta}_{\text{init}} - \beta^0 \right\Vert_2 &\leq \left\vert \left\vert \text{trace}(\tilde{\Sigma} \hat{F}) \right\vert^{1/2} - \left\vert \text{trace}(\Sigma_0 u_1 u_1^T) \right\vert^{1/2} \right\vert \Vert \hat{u}_1 \Vert_2 \\
&\phantom{aaa} + \left\vert \text{trace} (\Sigma_0 u_1 u_1^T) \right\vert^{1/2} \Vert \hat{u}_1 - u_1 \Vert_2 \\
&\leq \frac{1}{2 \sqrt{\Lambda_{\max}(\Sigma_0) - \alpha}} \alpha + 2 \sqrt{\varepsilon} \Lambda_{\max}(\Sigma_0)^{1/2}.
\end{align*}
\end{proof}

\subsection{Proofs of the lemmas in Subsection \ref{subsection:sparsepcaunifmissing}}
\begin{proof}[Proof of Lemma \ref{lemma:sparsepcamissing2}]
We notice that
\begin{align*}
&\left( \dot{R}_n(\tilde{\beta}) - \dot{R}(\tilde{\beta}) \right)^T (\tilde{\beta} - \beta^0) \\
&= \tilde{\beta}^T (\Sigma_0 - \tilde{\Sigma}) (\tilde{\beta} - \beta^0) \\
&= (\tilde{\beta} - \beta^0)^T(\Sigma_0 - \tilde{\Sigma}) (\tilde{\beta} - \beta^0) + \beta^{0^T}(\Sigma_0 - \tilde{\Sigma}) (\tilde{\beta} -\beta^0).
\end{align*}
We notice that
\begin{align*}
\tilde{\Sigma} - \Sigma_0 = \frac{Y^TY}{n} \div M - \Sigma_0 = \left( \frac{Y^TY}{n} - \Sigma_Y \right) \div M.
\end{align*}
Therefore, we have for all $v \in \mathbb{R}^p$ that
\begin{align*}
\left\vert v^T \left( \tilde{\Sigma} - \Sigma_0 \right) v \right\vert &= \left\vert v^T \left( \left( \frac{Y^TY}{n} - \Sigma_Y \right) \div M \right) v \right\vert \\
&\leq \frac{1}{\delta^2} \left\vert v^T \left( \frac{Y^TY}{n} - \Sigma_Y \right) v \right\vert
\end{align*}

To bound the previous quantity we invoke Lemma D.2 in \citet{elsener2018sharp}:
\begin{align*}
 &\left\vert (\tilde{\beta} - \beta^0)^T\left( \frac{Y^TY}{n} - \Sigma_Y \right) (\tilde{\beta} - \beta^0) \right\vert \\
 \leq& 12 C^2 \left( \frac{8(t + 2(\log(2p)+ 4))}{2 n \zeta} \right) (\tilde{\beta} - \beta^0)^T \Sigma_Y (\tilde{\beta} - \beta^0) \\
 &+ 12 C^2 \frac{\zeta}{2} (\tilde{\beta} - \beta^0)^T \Sigma_Y (\tilde{\beta} - \beta^0) \\
 &+ 12 C^2 \left( \frac{16 (\log(2p) + 4)}{2n \zeta} \Vert \tilde{\beta} - \beta^0 \Vert_1^2 \right) \\
 &+ 12 C^2 \frac{\zeta}{2} (\tilde{\beta} - \beta^0)^T \Sigma_Y (\tilde{\beta} - \beta^0) \\
 &+ 12 C^2 \left( \frac{t + 2(\log (2p) + 4)}{n} \right) (\tilde{\beta} - \beta^0)^T \Sigma_Y (\tilde{\beta} - \beta^0) \\
 &+ 12 C^2 \left( \frac{2(\log(2p) + 4)}{n} \right) \Vert \tilde{\beta} - \beta^0 \Vert_1^2. 
\end{align*}
With $t = \log(2p)$ and noticing that
\begin{align*}
&(\tilde{\beta} - \beta^0)^T \Sigma_Y (\tilde{\beta} - \beta^0) \\
&\leq \Lambda_{\max}(\Sigma_Y) \Vert \tilde{\beta} - \beta^0 \Vert_2^2 \\
& = \frac{\Lambda_{\max}(\Sigma_Y)}{2 \phi_{\max}(\rho - 3 \eta)} 2 \phi_{\max}(\rho - 3 \eta) \Vert \tilde{\beta} - \beta^0 \Vert_2^2 \\
&= \frac{\Lambda_{\max}(\Sigma_Y)}{2 \phi_{\max}(\rho - 3 \eta)} G(\Vert \tilde{\beta} - \beta^0 \Vert_2).
\end{align*}
we arrive at
\begin{align*}
\left\vert (\tilde{\beta} - \beta^0)^T \left( \frac{Y^TY}{n} - \Sigma_Y \right) (\tilde{\beta} - \beta^0)  \right\vert &\leq \gamma G(\Vert \tilde{\beta} - \beta^0 \Vert_2) \\
&+ 12 C^2 \frac{16(\log(2p) + 4)}{2n\zeta} \Vert \tilde{\beta} - \beta^0 \Vert_1^2 \\
&+ 12 C^2 \frac{2 (\log(2p) + 4)}{n} \Vert \tilde{\beta} - \beta^0 \Vert_1^2.
\end{align*}
Furthermore, we have that
\begin{align*}
\left\Vert (\tilde{\Sigma} - \Sigma_0) \beta^0 \right\Vert_\infty &= \left\Vert \left( \left( \frac{Y^TY}{n} - \Sigma_Y \right) \div M \right) \beta^0 \right\Vert_\infty \\
&\leq \frac{1}{\delta^2} \left\Vert \left( \frac{Y^TY}{n} - \Sigma_Y \right) \beta^0 \right\Vert_\infty.
\end{align*}
We also have that
\begin{align*}
\left\Vert \left( \frac{Y^TY}{n} - \Sigma_Y \right) \beta^0 \right\Vert_\infty = \underset{1 \leq j \leq p}{\max} \ \left\vert e_j^T \left( \frac{Y^TY}{n} - \Sigma_Y \right) \beta^0 \right\vert.
\end{align*}
For all $j = 1, \dots, p$ and all $t > 0$ we then have by Bernstein's inequality by noting that the rows of $Y$ are also sub-Gaussian with $\underset{\Vert \beta \Vert_2 \leq 1}{\sup} \ \Vert Y_i \beta \Vert_{\psi_2} = C$ for all $i = 1, \dots, n$. Hence,
\begin{align*}
&\left\vert e_j^T \left( \frac{Y^TY}{n} - \Sigma_Y \right) \beta^0 \right\vert \\
&\leq 8 C^2 \Vert \beta^0 \Vert_2 \sqrt{\frac{2 t}{n}} + 4 C^2 \Vert \beta^0 \Vert_2 \frac{t}{n}.
\end{align*}
By the union bound we then have
\begin{align*}
\left\Vert \left( \frac{Y^TY}{n} - \Sigma_Y \right) \beta^0 \right\Vert_\infty &\leq 8 C^2 \Vert \beta^0 \Vert_2 \sqrt{\frac{2(t + \log p)}{n}} + 4 C^2 \Vert \beta^0 \Vert_2 \frac{t + \log p}{n}.
\end{align*}

\end{proof}

\subsection{Proofs of the lemmas in Subsection \ref{subsection:sparsepcamultnoise}}

\begin{proof}[Proof of Lemma \ref{lemma:multnoiseinitial}]
As in the proof of Corollary 2 in the supplemental material of \citet{loh2011high} we notice that
\begin{align*}
\frac{Y^TY}{n} \div M - \Sigma_0 = \left( \frac{Y^TY}{n} - \Sigma_Y \right) \div M.
\end{align*}
We then have for all $v \in \mathbb{R}^p$
\begin{align*}
\left\vert v^T \left(\frac{Y^TY}{n} \div M - \Sigma_0 \right) v \right\vert &= \left\vert v^T \left( \left( \frac{Y^TY}{n} - \Sigma_Y \right) \div M \right) v \right\vert \\
&\leq \frac{1}{\Vert M \Vert_{\min} } \left\vert v^T \left( \frac{Y^TY}{n} - \Sigma_Y \right) v \right\vert \\
\end{align*}
To upper bound the latter term we make use of Lemma D.2. in \citet{elsener2018sharp}. By that lemma we have
\begin{align*}
\left\vert v^T \left(\frac{Y^TY}{n} - \Sigma_Y \right)v \right\vert &\leq 12 C^2 \sqrt{\frac{8(t + 2(\log(2p) + 4))}{n}} v^T \Sigma_Yv \\
&+ 12 C^2 \sqrt{\frac{16(\log(2p) + 4))}{n}} \Vert u \Vert_1 \sqrt{v^T \Sigma_Y v} \\
&+ 12 C^2 \left( \frac{t + 2(\log (2p) + 4)}{n} \right) v^T \Sigma_Y v \\
&+ 12 C^2 \left( \frac{2 (\log(2p) + 4)}{n} \right) \Vert v \Vert_1^2
\end{align*}
with probability at least $1- \exp(-t)$ for all $t > 0$.
We now choose $v = e_j$ and take the maximum over $j$ on both sides of the inequality:
\begin{align*}
\Vert \tilde{\Sigma} - \Sigma_0 \Vert_\infty &\leq \frac{12 C^2}{\Vert M \Vert_{\min}} \sqrt{\frac{8(t + 2(\log(2p) + 4))}{n}} \Vert \Sigma_Y \Vert_\infty \\
&+ \frac{12 C^2}{\Vert M \Vert_{\min}} \sqrt{\frac{16 (\log(2p) + 4) )}{n}} \sqrt{\Vert \Sigma_Y \Vert_\infty} \\
&+ \frac{12 C^2}{\Vert M \Vert_{\min}} \left( \frac{t + 2(\log(2p) + 4)}{n} \right) \Vert \Sigma_Y \Vert_\infty \\
&+ \frac{12 C^2}{\Vert M \Vert_{\min}} \left( \frac{2(\log(2p) + 4}{n} \right).
\end{align*}
\end{proof}

\begin{proof}[Proof of Lemma \ref{lemma:multnoiseacc}]
We notice that
\begin{align*}
&\left( \dot{R}_n(\tilde{\beta}) - \dot{R}(\tilde{\beta}) \right)^T (\tilde{\beta} - \beta^0) \\
&= \tilde{\beta}^T (\Sigma_0 - \tilde{\Sigma}) (\tilde{\beta} - \beta^0) \\
&= (\tilde{\beta} - \beta^0)^T(\Sigma_0 - \tilde{\Sigma}) (\tilde{\beta} - \beta^0) + \beta^{0^T}(\Sigma_0 - \tilde{\Sigma}) (\tilde{\beta} -\beta^0).
\end{align*}
As far as the first term is considered, we proceed as in the proof of Lemma \ref{lemma:multnoiseinitial}:
\begin{align*}
\left\vert v^T \left(\frac{Y^TY}{n} \div M - \Sigma_0 \right) v \right\vert &= \left\vert v^T \left( \left( \frac{Y^TY}{n} - \Sigma_Y \right) \div M \right) v \right\vert \\
&\leq \frac{1}{\Vert M \Vert_{\min} } \left\vert v^T \left( \frac{Y^TY}{n} - \Sigma_Y \right) v \right\vert \\
\end{align*}
To bound the previous quantity we invoke Lemma D.2 in \citet{elsener2018sharp}:
\begin{align*}
 &\left\vert (\tilde{\beta} - \beta^0)^T\left( \frac{Y^TY}{n} - \Sigma_Y \right) (\tilde{\beta} - \beta^0) \right\vert \\
 \leq& 12 C^2 \left( \frac{8(t + 2(\log(2p)+ 4))}{2 n \zeta} \right) (\tilde{\beta} - \beta^0)^T \Sigma_Y (\tilde{\beta} - \beta^0) \\
 &+ 12 C^2 \frac{\zeta}{2} (\tilde{\beta} - \beta^0)^T \Sigma_Y (\tilde{\beta} - \beta^0) \\
 &+ 12 C^2 \left( \frac{16 (\log(2p) + 4)}{2n \zeta} \Vert \tilde{\beta} - \beta^0 \Vert_1^2 \right) \\
 &+ 12 C^2 \frac{\zeta}{2} (\tilde{\beta} - \beta^0)^T \Sigma_Y (\tilde{\beta} - \beta^0) \\
 &+ 12 C^2 \left( \frac{t + 2(\log (2p) + 4)}{n} \right) (\tilde{\beta} - \beta^0)^T \Sigma_Y (\tilde{\beta} - \beta^0) \\
 &+ 12 C^2 \left( \frac{2(\log(2p) + 4)}{n} \right) \Vert \tilde{\beta} - \beta^0 \Vert_1^2. 
\end{align*}
With $t = \log(2p)$ and noticing that
\begin{align*}
&(\tilde{\beta} - \beta^0)^T \Sigma_Y (\tilde{\beta} - \beta^0) \\
&\leq \Lambda_{\max}(\Sigma_Y) \Vert \tilde{\beta} - \beta^0 \Vert_2^2 \\
& = \frac{\Lambda_{\max}(\Sigma_Y)}{2 \phi_{\max}(\rho - 3 \eta)} 2 \phi_{\max}(\rho - 3 \eta) \Vert \tilde{\beta} - \beta^0 \Vert_2^2 \\
&= \frac{\Lambda_{\max}(\Sigma_Y)}{2 \phi_{\max}(\rho - 3 \eta)} G(\Vert \tilde{\beta} - \beta^0 \Vert_2).
\end{align*}
we arrive at
\begin{align*}
\left\vert (\tilde{\beta} - \beta^0)^T \left( \frac{Y^TY}{n} - \Sigma_Y \right) (\tilde{\beta} - \beta^0)  \right\vert &\leq \gamma G(\Vert \tilde{\beta} - \beta^0 \Vert_2) \\
&+ 12 C^2 \frac{16(\log(2p) + 4)}{2n\zeta} \Vert \tilde{\beta} - \beta^0 \Vert_1^2 \\
&+ 12 C^2 \frac{2 (\log(2p) + 4)}{n} \Vert \tilde{\beta} - \beta^0 \Vert_1^2.
\end{align*}
Furthermore, we have that
\begin{align*}
\left\Vert (\tilde{\Sigma} - \Sigma_0) \beta^0 \right\Vert_\infty &= \left\Vert \left( \left( \frac{Y^TY}{n} - \Sigma_Y \right) \div M \right) \beta^0 \right\Vert_\infty \\
&\leq \frac{1}{\Vert M \Vert_{\min}} \left\Vert \left( \frac{Y^TY}{n} - \Sigma_Y \right) \beta^0 \right\Vert_\infty.
\end{align*}
We also have that
\begin{align*}
\left\Vert \left( \frac{Y^TY}{n} - \Sigma_Y \right) \beta^0 \right\Vert_\infty = \underset{1 \leq j \leq p}{\max} \ \left\vert e_j^T \left( \frac{Y^TY}{n} - \Sigma_Y \right) \beta^0 \right\vert.
\end{align*}
For all $j = 1, \dots, p$ and all $t > 0$ we then have by Bernstein's inequality by noting that the rows of $Y$ are also sub-Gaussian with $\underset{\Vert \beta \Vert_2 \leq 1}{\sup} \ \Vert Y_i \beta \Vert_{\psi_2} = C$ for all $i = 1, \dots, n$. Hence,
\begin{align*}
&\left\vert e_j^T \left( \frac{Y^TY}{n} - \Sigma_Y \right) \beta^0 \right\vert \\
&\leq 8 C^2 \Vert \beta^0 \Vert_2 \sqrt{\frac{2 t}{n}} + 4 C^2 \Vert \beta^0 \Vert_2 \frac{t}{n}.
\end{align*}
By the union bound we then have
\begin{align*}
\left\Vert \left( \frac{Y^TY}{n} - \Sigma_Y \right) \beta^0 \right\Vert_\infty &\leq 8 C^2 \Vert \beta^0 \Vert_2 \sqrt{\frac{2(t + \log p)}{n}} + 4 C^2 \Vert \beta^0 \Vert_2 \frac{t + \log p}{n}.
\end{align*}
\end{proof}

\subsection{Proofs of the lemmas in Subseciton \ref{subsection:nonuniformsparsepca}}
\begin{proof}[Proof of Lemma \ref{lemma:nonunifmissing}]
As in the proof of Corollary 2 in the supplemental material of \citet{loh2011high} we notice that
\begin{align*}
\frac{Y^TY}{n} \div M - \Sigma_0 = \left( \frac{Y^TY}{n} - \Sigma_Y \right) \div M
\end{align*}
We then have for all $v \in \mathbb{R}^p$
\begin{align*}
\left\vert v^T \left(\frac{Y^TY}{n} \div M - \Sigma_0 \right) v \right\vert &= \left\vert v^T \left( \left( \frac{Y^TY}{n} - \Sigma_Y \right) \div M \right) v \right\vert \\
&\leq \frac{1}{\Vert M \Vert_{\min} } \left\vert v^T \left( \frac{Y^TY}{n} - \Sigma_Y \right) v \right\vert \\
&\leq \frac{1}{\delta_{\min}^2} \left\vert v^T \left( \frac{Y^TY}{n} - \Sigma_Y \right) v \right\vert.
\end{align*}

\end{proof}
\begin{proof}[Proof of Lemma \ref{lemma:nonunifmisspca}]
The proof is identical to the proof of Lemma \ref{lemma:multnoiseacc} up to the fact that we have the following bound
\begin{align*}
\frac{1}{\Vert M \Vert_{\min}} \leq \frac{1}{\delta_{\min}^2}.
\end{align*}
\end{proof}
\subsection{Proofs of the lemmas in Subsection \ref{subsection:lowrankmissingdata}}

\begin{proof}[Proof of Lemma \ref{lemma:lowrankmissingdata}]
We have that
\begin{align*}
\Vert \tilde{\Sigma} - \Sigma_0 \Vert_\infty &= \left\Vert \tilde{Y}^T \tilde{Y} \div M - (\Sigma_0 + \Sigma_W) \right\Vert_\infty \\
&= \left\Vert \left(\tilde{Y}^T \tilde{Y} - \Sigma_{\tilde{Y}} \right) \div M \right\Vert_\infty \\
&\leq \frac{1}{\delta^2} \left\Vert \tilde{Y}^T \tilde{Y} - \Sigma_{\tilde{Y}} \right\Vert_\infty
\end{align*}
By Lemma D.2 in \citet{elsener2018sharp} we have for all $v \in \mathbb{R}^p$ that
\begin{align*}
\left\vert v^T \left( \tilde{Y}^T \tilde{Y} - \Sigma_{\tilde{Y}} \right) v \right\vert \leq& 12 C^2 \sqrt{\frac{8(t + 2(\log(2p) + 4))}{n}} v^T \Sigma_{\tilde{Y}} v \\
&+ 12 C^2 \sqrt{\frac{16(\log(2p) + 4))}{n}} \Vert v \Vert_1 \sqrt{v^T \Sigma_{\tilde{Y}} v} \\
&+ 12 C^2 \left( \frac{t + 2(\log(2p) + 4)}{n} \right) v^T \Sigma_{\tilde{Y}} v \\
&+ 12 C^2 \left( \frac{2(\log(2p) + 4)}{n} \right) \Vert v \Vert_1^2
\end{align*}
with probability at least $1 - \exp(-t)$ for all $t > 0$.
Hence, taking $v = e_j$ and taking the maximum over all $j$ on both sides we obtain
\begin{align*}
\Vert \tilde{\Sigma} - \Sigma_0 \Vert_\infty \leq& 12 C^2 \sqrt{\frac{8(t + 2(\log(2p) + 4))}{ \delta^4 n}} \Vert \Sigma_{\tilde{Y}} \Vert_\infty \\
&+ 12 C^2 \sqrt{\frac{16(\log(2p) + 4)}{\delta^4 n}} \sqrt{\Vert \Sigma_{\tilde{Y}} \Vert_\infty} \\
&+ 12 C^2 \left( \frac{t + 2(\log(2p) + 4)}{\delta^2 n} \right) \Vert \Sigma_{\tilde{Y}} \Vert_\infty \\
&+ 12 C^2 \left( \frac{2(\log(2p) + 4)}{\delta^2 n} \right)
\end{align*}
\end{proof}

\begin{proof}[Proof of Lemma \ref{lemma:lowrankmissingdatanonconvex}]
The proof is identical to the proof of Lemma \ref{lemma:multnoiseacc} up to the fact that we have the following bound
\begin{align*}
\frac{1}{\Vert M \Vert_{\min}} \leq \frac{1}{\delta^2}.
\end{align*}
\end{proof}
\newpage

\bibliographystyle{apalike}
\bibliography{/Users/anelsene/Dokumente/sparse-subspace-missing-data/myreferences.bib}

\end{document}